\newtheorem{thm}{Theorem}[section]
\newtheorem{cor}[thm]{Corollary}
\newtheorem{remark}[thm]{Remark}
\newtheorem{prop}[thm]{Proposition}
\newtheorem{defn}[thm]{Definition}
\newtheorem{ex}[thm]{Example}
\newtheorem{prob}[thm]{Problem}
\newcommand{\bb}[1]{\mathbb{#1}}
\newcommand{\cl}[1]{\mathcal{#1}}
\begin{document}
\title[Injectivity]
{Injectivity and Projectivity in Analysis and Topology}

\author[Don Hadwin]{Don Hadwin}
\address{Don Hadwin: Department of Mathematics, University of New
  Hampshire, Durham, NH 03824}
\email{don@math.unh.edu}
\author[Vern I. Paulsen]{Vern I. Paulsen}
\address{Vern I. Paulsen: Department of Mathematics, University 
of Houston,
4800 Calhoun Road, Houston, TX 77204-3008 U.S.A.}
\email{vern@math.uh.edu, URL: http://www.math.uh.edu/$\sim$vern/}
\thanks{Research supported in part by a grant from the National 
Science
Foundation}
\keywords{injective, multipliers, operator space, Banach-Stone}
\subjclass{Primary 46L05; Secondary 46A22, 46H25, 46M10, 47A20}
\date{\today}

\begin{abstract}We give new proofs of many injectivity results in
  analysis that make more careful use of the duality between abelian
  C*-algebras and topological spaces.  We then extend many of these
  ideas to incorporate the case of a group action. This approach gives
  new insight into Hamana's theory of G-injective operator
spaces and G-injective envelopes. Our new proofs of these classic
results, use only topological methods and eliminate the need for
results from the theory of Boolean algebras and AW*-algebras.
\end{abstract}
\maketitle
\section{Introduction}
This paper has several goals. The first is primarily pedagogical and expository.
There are many results concerned with determining the injective objects in various settings in analysis, which we unify into one fairly straightforward result, whose proof exploits more fully the duality between topological spaces and abelian C*-algebras than earlier proofs. In addition, we show that a number of other results in the theory of operator algebras can be given more simple proofs by exploiting this duality.

Recall that there is a contravariant functor between the category whose objects are compact, Hausdorff spaces with morphisms the continuous maps between them and the category whose objects are unital abelian C*-algebras with morphisms the *-homomorphisms. Because this functor reverses arrows, it carries the diagram that defines injective objects to the diagram that defines projective objects.
This correspondence was utilized by A. Gleason\cite{Gl}, who was the
first to define projective topological spaces and then used this
functor to make some observations about injectivity for abelian
C*-algebras. This work was built upon by
Gonshor\cite{Go1},\cite{Go2}. However, their work used Stone's \cite{St} characterization of the maximal ideal space of complete Boolean algebras and Birkhof's characterization of the Boolean algebra generated by the regular sets in a topological space, which are both fairly non-trivial results. In particular, these results introduce non-topological methods into their work.
Our first goal is to re-derive their work in a purely elementary, topological manner.

Once this is done, several fairly deep results, such as the Nachbin-Goodner-Kelley \cite{Na}, \cite{Goo}, \cite{Ke}, characterization of the injective objects in the category of Banach spaces and contractive linear maps and the characterization of the maximal ideal spaces of abelian AW*-algebras can be combined into one relatively short theorem, whose proof uses more fully the correspondences between injectivity and projectivity. By carrying out all of our constructions in the toplogical setting, we also clarify the construction and properties of the injective envelopes, by dually, first constructing a minimal projective cover.

Finally, using projectivity of topological spaces, we can give simpler
proofs of some of the results of \cite{DP1}, \cite{DP2} and \cite{Pea} concerning continuous matrix-valued functions defined on Stonian spaces. These latter proofs borrow ideas from Azoff's work \cite{Az}, which originally simplified some results in measurable selection. In essence, Azoff's work recognized the utility of projectivity arguments in measurable selection theory.

Our second motivation is to better understand Hamana's work on
G-injective operator systems and G-injective envelopes \cite{H3} \cite{H4}, by first working out the corresponding parallel theory of G-projective topological spaces, where G denotes a discrete group acting
on all of the spaces. Here we are less successful.

Even in the case of abelian C*-algebras there are subtle differences
and similarities between injectivity and G-injectivity that we will
explore.

In particular, if $X$ is a compact, Hausdorff space, then injectivity of the C*-algebra of continuous functions on $X, C(X)$ in several different categories
is known to be equivalent to the space, $X$, being {\em projective} in the sense of A. Gleason\cite{Gl}. However, we shall show that these analogies fail in the presence of a group action. If $X$ is G-projective, in a sense analogous to Gleason's, then $C(X)$ is G-injective in Hamana's sense, but not conversely. Thus, G-projectivity of $X$ is a stronger condition than G-injectivity of $C(X)$, while when there is no group action these two conditions are equivalent.

Thus, in an attempt to understand these distinctions more clearly, we revisited the circle of results surrounding injectivity of $C(X)$ and projectivity of $X.$ While section 2 is primarily expository, we feel that our proofs of the main results in the area are conceptually simpler and serve to better clarify why several definitions of injectivity in various categories all coincide with projectivity of the topological space. Moreover, this presentation serves to motivate the results of the later sections in the presence of group actions.

In section 3, we introduce the concept of a G-projective topological space and attempt to prove as many parallels with the results of section 2 as possible. 

In section 4, we apply the results of section 3 to the study of G-injective operator systems and G-injective envelopes.


\section{Projective Spaces}

In this section, we give an alternative presentation of Gleason's theory of projective topological
spaces\cite{Gl} that avoids the use of Boolean algebras and uses
entirely topological means. Similarly, we develop
the concept of an essential, projective cover of a topological space. We then use duality to apply these results to abelian
C*-algebras and show that the essential projective cover of a space $X$,
corresponds to the injective envelope of $C(X)$. Somewhat similar
ideas are worked out by Gonshor\cite{Go1}, but he immediately left the
category of topological spaces and worked out the injective envelope
in the category of abelian C*-algebras first and then used duality to return to topological spaces.

We begin by studying projectivity in the category whose objects are compact, Hausdorff spaces and whose
maps are continuous functions.

\begin{defn}[Gleason]
A compact, Hausdorff topological space $P$ is {\bf projective}, if for any pair of topological spaces,
$X, Y$ and pair of continuous maps $h: Y \to X$ and $f: P \to X$, with $h$ onto, there exists a continuous
map $r: P \to Y$ such that $h \circ r(p) = f(p)$ for every $p \in P.$ We will call r a {\bf lifting} of f.
\end{defn}

\begin{defn} A compact, Hausdorff space $X$ is {\bf extremally
    disconnected} or {\bf Stonian} if the closure of every open set is open.
\end{defn}

Stone proved that the maximal ideal space of a complete Boolean algebra is extremally disconnected, which is why these spaces are also called Stonian.

The following result of Gleason's uses only elementary topological
methods.

\begin{thm}[Gleason] A compact, Hausdorff space is projective if and
  only if it is extremally disconnected.
\end{thm}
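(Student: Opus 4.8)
We argue both implications, treating the spaces as compact and Hausdorff throughout (the setting in which we shall apply the result via the duality with abelian C*-algebras).

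\emph{Projective implies Stonian.} I would show directly that the closure of an arbitrary open set is open. Given an open $U \subseteq P$, set $A = \overline{U}$ and $B = \overline{P \setminus \overline{U}}$; these are closed with $A \cup B = P$, since $U \cup (P \setminus \overline{U})$ is dense. Let $Y = A \sqcup B$ be the disjoint topological sum and let $h \colon Y \to P$ be the map given by the inclusion on each summand, so $h$ is a continuous surjection. Applying projectivity to $f = \mathrm{id}_P$ produces a continuous $r \colon P \to Y$ with $h \circ r = \mathrm{id}_P$. Because the two summands are clopen in $Y$, the sets $E := r^{-1}(A)$ and $r^{-1}(B)$ form a clopen partition of $P$; and since $h$ restricts to the inclusion on each summand, $E \subseteq A$ and $r^{-1}(B) \subseteq B$. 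As $U$ is open and disjoint from $B$, it is disjoint from $r^{-1}(B)$, hence $U \subseteq E$. Thus $U \subseteq E \subseteq \overline{U}$ with $E$ clopen, forcing $E = \overline{U}$, which is therefore open.

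\emph{Stonian implies projective.} The plan is to reduce the lifting problem to a section problem and then invoke an irreducibility argument. Given $h \colon Y \to X$ onto and $f \colon P \to X$, form the pullback $Z = \{(p,y) \in P \times Y : f(p) = h(y)\}$, a closed subspace of $P \times Y$, with projection $\pi \colon Z \to P$. Since $h$ is onto, $\pi$ is a continuous surjection, and a continuous section $s$ of $\pi$ yields the desired lifting $r = \pi_Y \circ s$. It therefore suffices to prove that every continuous surjection $\pi \colon Z \to P$ from a compact Hausdorff space admits a continuous section. To find one, I would apply Zorn's lemma to the family of closed subsets $C \subseteq Z$ with $\pi(C) = P$, ordered by reverse inclusion; compactness of $Z$ guarantees that the intersection along a chain still surjects onto $P$, so a minimal such $Z_0$ exists, and $\pi|_{Z_0}$ is then \emph{irreducible}, meaning no proper closed subset of $Z_0$ maps onto $P$.

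\emph{The main obstacle} is the key lemma: an irreducible continuous surjection $g = \pi|_{Z_0} \colon Z_0 \to P$ onto a Stonian space is a homeomorphism, for then $g^{-1}$ composed with the inclusion $Z_0 \hookrightarrow Z$ is the required section. Since a continuous bijection between compact Hausdorff spaces is a homeomorphism, it is enough to show $g$ is injective, and this is exactly where extremal disconnectedness is used. Suppose $g(z_1) = g(z_2) = p$ with $z_1 \neq z_2$, and choose disjoint open $V_1 \ni z_1$, $V_2 \ni z_2$. The engine of the argument is that for any nonempty open $V \subseteq Z_0$, irreducibility makes $g(Z_0 \setminus V)$ a \emph{proper} closed set, so $U_V := P \setminus g(Z_0 \setminus V)$ is nonempty open, lies in $g(V)$, and satisfies $g^{-1}(U_V) \subseteq V$. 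Taking $U_i = U_{V_i}$ gives disjoint nonempty open sets (disjointness of $V_1, V_2$ forces $U_1 \cap U_2 = \emptyset$), while shrinking $V_i$ to $g^{-1}(N) \cap V_i$ for neighborhoods $N$ of $p$ shows $p \in \overline{U_1} \cap \overline{U_2}$. But in a Stonian space each $\overline{U_i}$ is clopen, so disjoint open sets have disjoint closures, contradicting $p \in \overline{U_1} \cap \overline{U_2}$. Hence $g$ is injective, completing the proof.
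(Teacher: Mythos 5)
Your proof is correct. Note that the paper itself does not supply a proof of this theorem --- it states Gleason's result and cites \cite{Gl} --- so there is no in-paper argument to compare against; what you have written is essentially the classical Gleason argument, and both directions check out. In the forward direction, the decomposition $P=\overline{U}\cup\overline{P\setminus\overline{U}}$, the disjoint sum, and the clopen set $E=r^{-1}(\overline{U})$ squeezed between $U$ and $\overline{U}$ are all handled correctly. In the converse, the pullback reduction to finding a section, the Zorn's lemma passage to a minimal closed subset surjecting onto $P$, and the irreducibility lemma (an irreducible surjection onto an extremally disconnected space is injective, via the sets $U_V=P\setminus g(Z_0\setminus V)$ and the fact that disjoint open sets in a Stonian space have disjoint closures) are exactly the right ingredients, and your verification that $p\in\overline{U_1}\cap\overline{U_2}$ by shrinking $V_i$ to $g^{-1}(N)\cap V_i$ is the one step that is often glossed over; you do it properly. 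It is worth observing that your argument is well aligned with the rest of the paper: the reduction of lifting to finding a section is the content of Proposition 2.10 there, and the Zorn's lemma selection of a minimal compact subset mapping onto the base is the same device the paper uses before Proposition 2.15 to build essential covers, so your proof could be inserted where the paper leaves the theorem unproved without disturbing the surrounding development.
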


The following results summarize the importance of these concepts and organizes several results that appear
in different places. The direct proofs that we supply below have the
advantage of not using any of the theory of Boolean algebras, but in turn can be used to prove Stone's theorem(see Remark 2.5).

\begin{thm}
Suppose $\mathcal{A}$ is a unital commutative C*-algebra with maximal ideal
space $X.$ The following are equivalent:

\begin{enumerate}
\item $\mathcal{A}$ is an injective operator system,

\item $\mathcal{A}$ is injective in the category of commutative unital
C*-algebras and $\ast$-homomorphisms,

\item $X$ is projective,

\item Whenever $\mathcal{E},\mathcal{F}$ are nonempty subsets of
$\mathcal{A}^{sa}$ such that, $\mathcal{E}$ $\leq\mathcal{F},$ there is an
$a\in\mathcal{A}^{sa}$ such that $\mathcal{E}\leq a\leq\mathcal{F},$

\item $\mathcal{A}$ is injective in the category of Banach spaces and contractive, linear maps.
\bigskip
\end{enumerate}
\end{thm}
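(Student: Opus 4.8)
The plan is to treat condition (3) as the hub and to install (4) as its purely order-theoretic avatar, proving the ring of equivalences $(2)\Leftrightarrow(3)\Leftrightarrow(4)$, $(4)\Leftrightarrow(1)$ and $(4)\Leftrightarrow(5)$; by Gleason's Theorem I may silently replace ``$X$ projective'' in (3) by ``$X$ extremally disconnected'' throughout. The equivalence $(2)\Leftrightarrow(3)$ is immediate from the Gelfand duality discussed in the Introduction: the contravariant functor $X\mapsto C(X)$ sends injective $*$-homomorphisms of unital commutative C*-algebras to surjective continuous maps, so the injectivity lifting diagram for $\mathcal A=C(X)$ in the commutative C*-category is carried arrow-for-arrow onto the diagram defining projectivity of its maximal ideal space $X$, with the extending $*$-homomorphism dual to the continuous lifting.

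For $(3)\Leftrightarrow(4)$ I would argue topologically and identify (4) with monotone completeness of $\mathcal A^{sa}=C_{\mathbb R}(X)$. For $(3)\Rightarrow(4)$, given $\mathcal E\le\mathcal F$ note that $\mathcal E$ is order bounded above (by any element of $\mathcal F$); extremal disconnectedness is exactly what is needed to show that the pointwise supremum $\sup_{e\in\mathcal E}e$, which is only lower semicontinuous, has a continuous least upper bound $a\in C_{\mathbb R}(X)$ (the closure of each set $\{\,\sup_{e}e>t\,\}$ being clopen). Since every $f\in\mathcal F$ is an upper bound for $\mathcal E$ and $a$ is the least one, $\mathcal E\le a\le\mathcal F$. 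Conversely, for $(4)\Rightarrow(3)$ I would feed (4) a set $\mathcal E$ and the set $\mathcal F$ of all its upper bounds to produce suprema, i.e. monotone completeness; applying this to $\mathcal E=\{f\in C_{\mathbb R}(X):0\le f\le 1,\ f|_{X\setminus U}=0\}$ for an open $U$ yields a continuous $\sup\mathcal E$ that one checks (using Urysohn's lemma) equals the characteristic function of $\overline U$, forcing $\overline U$ to be clopen.

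The analytic heart is $(4)\Leftrightarrow(1)$. For $(4)\Rightarrow(1)$ I would run the Arveson/Hahn--Banach extension for operator systems one self-adjoint dimension at a time: to extend a unital positive map $\phi\colon S\to\mathcal A$ across a single new self-adjoint $t$, the order constraints force $\phi(t)$ into the gap between $\mathcal E=\{\phi(s):s\in S^{sa},\,s\le t\}$ and $\mathcal F=\{\phi(s'):s'\in S^{sa},\,t\le s'\}$, which satisfy $\mathcal E\le\mathcal F$ by positivity, so (4) supplies an admissible value; Zorn's lemma completes the extension, and because the range $\mathcal A$ is commutative the resulting positive map is automatically completely positive, giving injectivity as an operator system. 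For the converse $(1)\Rightarrow(4)$ I would apply injectivity to the canonical unital complete order embedding $\mathcal A\hookrightarrow\mathcal A^{**}$ together with the identity on $\mathcal A$ to obtain a unital completely positive projection $E\colon\mathcal A^{**}\to\mathcal A$ fixing $\mathcal A$; since the W*-algebra $\mathcal A^{**}$ is monotone complete it contains some $b$ with $\mathcal E\le b\le\mathcal F$ (e.g. $b=\sup\mathcal E$), and positivity of $E$ with $E|_{\mathcal A}=\mathrm{id}$ then gives $\mathcal E\le E(b)\le\mathcal F$ with $E(b)\in\mathcal A^{sa}$.

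Finally $(4)\Leftrightarrow(5)$ is the Nachbin--Goodner--Kelley content. For $(4)\Rightarrow(5)$ I would again extend one dimension at a time: extending a contraction $T\colon E\to\mathcal A$ to $E\oplus\mathbb R x_0$ amounts to choosing $a=T(x_0)$ with $Te-\|e-x_0\|\,1\le a\le \|e'+x_0\|\,1-Te'$ for all $e,e'$, and the triangle inequality gives exactly the hypothesis $\mathcal E\le\mathcal F$ needed to invoke (4). For $(5)\Rightarrow(4)$ I would pass through the binary intersection property of balls: 1-injectivity yields that any pairwise-intersecting family of closed balls in $\mathcal A$ has a common point, and encoding the one-sided constraints $a\ge e$, $a\le f$ as balls $\overline B(e+C,C)$, $\overline B(f-C,C)$ with $C$ large makes pairwise intersection equivalent to $\mathcal E\le\mathcal F$, so a common point is the desired interpolant. \emph{The main obstacle} is this last equivalence in the genuinely complex-scalar setting: both the one-step Hahn--Banach extension and the ball computation are transparent over $\mathbb R$, but for complex $C(X)$ they require the standard complex refinements (controlling $\|Te+\mu a\|$ for all $\mu\in\mathbb C$, equivalently the complex binary intersection property), which must be reduced back to the real interpolation statement (4); keeping the Zorn extensions unital and completely positive rather than merely positive in the operator-system step is the other point demanding care.
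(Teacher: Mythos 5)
Your proposal is correct in outline, but it follows a genuinely different route from the paper's. The paper proves the single cycle $(1)\Rightarrow(4)\Rightarrow(3)\Rightarrow(2)\Rightarrow(3)\Rightarrow(5)\Rightarrow(1)$, and its whole point is that every nontrivial implication can be reduced to a topological lifting argument: $(1)\Rightarrow(4)$ interpolates in $\ell^\infty(X)$ and pushes down by the completely positive projection onto $C(X)$ (where you instead project from $\mathcal A^{**}$ --- same idea, different ambient monotone-complete algebra); $(3)\Rightarrow(5)$ is done by composing $T$ with evaluations to get a map $X\to B_1^*$ and lifting it through the surjection $C_1^*\to B_1^*$ of weak-* compact dual balls; and $(5)\Rightarrow(1)$ is the one-line observation that a contractive extension of a unital map is unital contractive, hence positive, hence completely positive into an abelian target. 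You instead reassemble the classical ingredients: Stone--Nakano for $(3)\Leftrightarrow(4)$, the one-self-adjoint-element-at-a-time Krein/Hahn--Banach extensions for $(4)\Rightarrow(1)$ and $(4)\Rightarrow(5)$, and the binary intersection property for $(5)\Rightarrow(4)$. All of these are correct and standard, and your version has the merit of exhibiting $(4)$ as the order-theoretic hub; but it is substantially longer, and the leg you rightly flag as the main obstacle --- $(5)\Rightarrow(4)$ via ball intersections over complex scalars --- is exactly the Nachbin--Goodner--Kelley/Hasumi difficulty that the paper is engineered to avoid. Note that this leg is also dispensable in your own scheme: you already prove $(1)\Rightarrow(4)$, so you could close the loop with the paper's trivial $(5)\Rightarrow(1)$ and drop the binary-intersection argument entirely, which would eliminate the only genuinely delicate point in your proposal.
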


\begin{proof}

{\bf $\left(  1\right)  \Rightarrow\left(  4\right)  .$} Suppose
$\mathcal{A=C}\left(  X\right)  $ is an injective operator system. Since
$C\left(  X\right)  \subset\ell^{\infty}\left(  X\right)  =\left\{
f|f:X\rightarrow\mathbb{C},f\text{ bounded}\right\}  ,$ there is a completely
positive linear map $\varphi:\ell^{\infty}\left(  X\right)  \rightarrow
C\left(  X\right)  $ that fixes each element of $C\left(  X\right)  .$ Suppose
$\mathcal{E}$ and $\mathcal{F}$ are as in $\left(  4\right)  .$ We can choose
$g\in\ell^{\infty}\left(  X\right)  $ such that, for every $e\in\mathcal{E}$
and every $f\in\mathcal{F},$ $e\leq g\leq f,$ which implies $e=\varphi\left(
e\right)  \leq\varphi\left(  g\right)  \leq\varphi\left(  f\right)  =f.$

\item[$\left(  4\right)  \Rightarrow\left(  3\right)  $] Suppose $U$ is an
open subset of $X$ and let
\[
\mathcal{E=}\left\{  f\in C\left(  X\right)  ^{sa}:f\leq\chi_{\bar{U}%
}\right\}  ,
\]
and let%
\[
\mathcal{F=}\left\{  g\in C\left(  X\right)  :\chi_{\bar{U}}\leq g\right\}  .
\]
It follows from $\left(  4\right)  $ that there is an $h\in C\left(  X\right)
$ such that
\[
f\leq h\leq g
\]
for every $f\in\mathcal{E}$ and every $g\in\mathcal{F}.$ It is clear that
$g=\chi_{\bar{U}}.$ Hence $\bar{U}$ is open. Therefore, $X$ is
extremally disconnected and hence projective.

{\bf $\left(  3\right)  \Rightarrow\left(  2\right)  .$} Suppose $Y$ is
compact Hausdorff space, $\mathcal{B}$ is a unital C*-subalgebra of $C\left(
Y\right)  ,$ and $\pi:\mathcal{B}\rightarrow C\left(  X\right)  $ is a
$\ast$-homomorphism. Identifying $\cl B = C(Z)$ for some compact,
Hausdorff space $Z,$ the inclusion of $C(Z)$ into $C(Y)$ is given by
composition with
a continuous function, $f: Y \to Z$, which is onto because the
inclusion is one-to-one. Similarly, there exists $h: X \to Z$ such
that $\pi$ is given by composition with $h.$ Since $X$ is projective,
there exists a lifting of $h, r: X \to Y.$ If we define $\rho: C(Y)
\to C(X)$ to be composition with $r,$ then this homomorphism extends
$\pi.$
Hence (2) follows.

{\bf $\left( 2\right) \Rightarrow\left( 3\right).$} Suppose that we are given compact, Hausdorff
spaces $Y,Z$ a continuous onto function, $f:Z \to Y$ and a continuous function $h: X \to Y.$
By composition, $f$ induces a one-to-one *-homomorphism $\pi_f: C(Y) \to C(Z)$ and $h$ induces
a *-homomorphism $\pi_h : C(Y) \to C(X)$. By the injectivity of $C(X)$ in the category of abelian
$C^*$-algebras, there exists a *-homomorphism $\pi: C(Z) \to C(X)$ extending $\pi_f$. By Poincare
duality, this *-homomorphism is induced by a continuous function $s: X \to Z,$ which can be seen to be
a lifting of $h.$ Hence, $X$ is projective.

{\bf $\left(  3\right)  \Rightarrow\left(  5\right).$} Let $B \subset C$ be Banach spaces and let 
$T: B \to C(X)$ be a contractive linear map. Let $B^*_1$ denote the unit ball of the dual of $B$ equipped with 
the wk*-topology. Composing $T$ with the evaluation functionals on $X$ yields a continuous map, $h:X \to B^*_1$.
The containment of $B$ in $C$ yields, via Hahn-Banach a continuous map, $f: C^*_1 \to B^*_1$. Taking any lifting,
$s: X \to C^*_1$ yields a contractive, linear map, $T_s: C \to C(X)$ by setting $T_s(v)(x) = s(x)(v)$
and it is readily checked that this map extends $T.$ Thus, $C(X)$ is injective in the category of Banach spaces and
contractive linear maps.

{\bf $\left(  5\right)  \Rightarrow\left(  1\right).$} Let $S_1 \subseteq S_2$ be unital operator systems and
let $\phi: S_1 \to C(X)$ be a unital, completely positive map. Then $\phi$ is contractive and so there exists a
contractive, linear extension $\psi: S_2 \to C(X).$ But since $\psi$ is an extension, it is unital and unital contractive
maps are positive. Finally, since $C(X)$ is abelian every positive map is completely positive and so $C(X)$ is injective
in the category of operator systems and unital, completely positive maps.

\end{proof}

\begin{remark} \begin{itemize}
\item[(i)] 
The equivalence of (2) and (3) is due to Gleason and was his key motivation for introducing projective spaces.
\item[(ii)]
Nachbin-Goodner-Kelley\cite{Na,Goo,Ke} prove that a Banach space is
injective in the category of Banach spaces and contractive, linear
maps if and only if it is isometrically isomorphic to $C(X)$ where $X$
is extremally disconnected. However, it is easy to prove that if a
Banach space is injective, then it is linearly isometrically isomorphic to $C(X)$ for some
compact, Hausdorff space $X$. To see this assume that $Y$ is injective and
consider the canonical embedding of $Y \subseteq C(Y^*_1)$, into the
continuous functions on the unit ball of the dual space. Since $Y$ is
injective, there is a contractive, projection $\psi: C(Y^*_1) \to Y.$
Using this projection we may define a binary relation on $Y$ by setting, $y_1
\circ y_2 = \psi(y_1 \cdot y_2),$ where $\cdot$ is the product in
$C(Y^*_1)$. One checks that this binary relation is a product that makes $Y$ into an abelian
C*-algebra and hence, $Y$ is isometrically isomorphic to $C(X)$ for
some compact, Hausdorff space $X$. Thus, the equivalence of (3) and
(5), together with this observation and Gleason's theorem yields the
Nachbin-Goodner-Kelley theorem.
\item[(iii)]
An abelian C*-algebra is an AW*-algebra if and only if every bounded set of self-adjoint elements has a least upper bound. It is easily seen that given the situation of $(3)$, the least upper bound of the set will do for $a.$ Thus, every abelian AW*-algebra satisfies $(4)$. Conversely, given any bounded set, $\cl E$, if we let $\cl F$ denote the set of all upper bounds, then the element $a$ that is assumed to exist in $(4)$ is easily seen to be a least upper bound. Thus, $(3)$ is equivalent to requiring that $\cl A$ be an AW*-algebra.
\item[(iv)]
Isbell\cite{Is} proves that $C(X)$ is injective in the category of Banach spaces and contractive linear maps if and only if
$C(X)$ is an abelian $AW^*$-algebra. Thus, Isbell's theorem follows from the equivalence of $(4)$ and $(5)$.
\item[(v)]
Finally, Stone's theorem says that the maximal ideal space of a complete Boolean algebra is an extremally disconnected space. If $\cl B$ is a complete Boolean algebra, then it can be shown that $C^*(\cl B)$ is an abelian, AW*-algebra, and, hence, Stone's theorem follows from Gleason's theorem and the equivalence of (3) and (4).
\item[(vi)] 
Hamana\cite{H1} uses Isbell's result to argue that $C(X)$ is an injective operator system if and only if
 $C(X)$ is an abelian $AW^*$-algebra. This follows from the equivalence of (2) and (4).
\end{itemize}
 \end{remark}
 
 We now illustrate how some of the results of \cite{DP1,DP2,Pea}, concerning continuous matrix-valued functions on Stonian spaces, can be deduced more readily, using Gleason's characterization of these spaces as projective. Our proofs mimic the proofs given by Azoff\cite{Az} in the context of von Neumann's principle of measurable selection. In fact, Azoff seems to have recognized that the principle of measurable selection is really a statement about projectivity in a category with Borel measurable maps.


\begin{prop} \cite[Theorem 1]{DP1} Let $X$ be a Stonian space and let $p_0(x),..., p_{n-1}(x)$ be continuous complex-valued functions on $X$, then there exist continuous functions, $\lambda_1(x),..., \lambda_{n-1}(x)$ on $X$, such that $$z^n + \sum_{i=0}^{n-1} p_i(x)z^i = \prod_{i=1}^n (z- \lambda_i(x)).$$
That is, every monic polynomial, whose coefficients are continuous functions on a Stonian space, can be factored with roots that are continuous functions.
\end{prop}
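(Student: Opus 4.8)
The plan is to realize the factorization as a lifting problem and then invoke Gleason's theorem, that every Stonian space is projective (Theorem 2.3). The natural map to lift against is the Vieta map $V \colon \mathbb{C}^n \to \mathbb{C}^n$ sending a tuple of roots $(\lambda_1, \ldots, \lambda_n)$ to the coefficient tuple of $\prod_{i=1}^n (z - \lambda_i)$, that is, to $\bigl((-1)^n e_n, \ldots, -e_1\bigr)$, where $e_k$ denotes the $k$-th elementary symmetric polynomial. This $V$ is a polynomial, hence continuous, map, and it is surjective precisely because of the fundamental theorem of algebra: every monic polynomial of degree $n$ factors into linear factors. The coefficient functions assemble into a continuous map $P \colon X \to \mathbb{C}^n$, $P(x) = (p_0(x), \ldots, p_{n-1}(x))$, and a continuous lift $r \colon X \to \mathbb{C}^n$ with $V \circ r = P$ is exactly a continuous choice of roots; writing $r(x) = (\lambda_1(x), \ldots, \lambda_n(x))$ then yields the desired continuous functions.

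Since projectivity is formulated for maps between compact Hausdorff spaces, the first real step is to cut everything down to compact pieces. Because $X$ is compact and each $p_i$ is continuous, there is a bound $M$ with $|p_i(x)| \le M$ for all $i$ and $x$, so $P$ takes values in the compact polydisc $K = \{ c \in \mathbb{C}^n : |c_j| \le M \}$. A standard root estimate (Cauchy's bound) shows that every monic polynomial with coefficient tuple in $K$ has all of its roots in the disc $|z| \le R$ with, say, $R = 1 + M$. I would therefore set $L = \{\lambda \in \mathbb{C}^n : |\lambda_i| \le R \text{ and } V(\lambda) \in K\}$, a closed and bounded, hence compact, subset of root space. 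The restriction $V|_L \colon L \to K$ is continuous, and it is onto: given any $c \in K$, the fundamental theorem of algebra factors the corresponding polynomial, and the root estimate places the resulting root tuple inside $L$.

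With these compact spaces in hand I apply projectivity of $X$. By Gleason's theorem the Stonian space $X$ is projective, so the continuous map $P \colon X \to K$ lifts through the continuous surjection $V|_L \colon L \to K$ to a continuous map $r \colon X \to L$ satisfying $(V|_L) \circ r = P$. The coordinate functions $\lambda_1, \ldots, \lambda_n$ of $r$ are continuous complex-valued functions on $X$, and the identity $V(r(x)) = P(x)$ says precisely that $z^n + \sum_{i=0}^{n-1} p_i(x) z^i = \prod_{i=1}^n (z - \lambda_i(x))$ for every $x \in X$, which is the claim.

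The conceptual content is carried entirely by Gleason's theorem; the only points that need care lie in the reduction to compact spaces, where the two ingredients are the uniform boundedness of the roots in terms of the coefficients (so that $L$ is compact and $V|_L$ is genuinely onto $K$) and the fundamental theorem of algebra (for surjectivity). I expect the main obstacle to be essentially bookkeeping: verifying that the chosen $L$ is compact and that $V$ restricted to it still surjects onto the coefficient polydisc $K$. I would also note that the lift chooses the roots with no preferred ordering, which is immaterial for the stated identity; this absence of a canonical section is exactly the phenomenon that makes projectivity, rather than an explicit formula, the right tool, in keeping with Azoff's measurable-selection viewpoint.
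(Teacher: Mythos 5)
Your argument is correct and is essentially the paper's own proof: both realize the factorization as a lifting of the coefficient map through the root-to-coefficient (Vieta) surjection and invoke projectivity of the Stonian space. The paper compresses your compactness reduction into the single remark that $h^{-1}(C)$ is compact for compact $C$, which is exactly the Cauchy root bound you spell out; your explicit construction of $L$ and $K$ just fills in that step.
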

\begin{proof}
Let $\cl P_n$ denote the space of monic polynomials of degree $n$, identified with $\bb C^n,$ and consider the map, $h: \bb C^n \to \cl P_n$ defined by $$h((\lambda_1,...,\lambda_n))= \prod_{i=1}^n (z- \lambda_i).$$
Note that $h$ is onto, and that if $C$ is a compact subset of $\cl P_n$, then $h^{-1}(C) \subseteq \bb C^n$ is compact.

Setting, $p(x) = z^n + \sum_{i=0}^{n-1}p_i(x)z^i$ defines a continuous map, $p:X \to \cl P_n.$ Since $X$ is projective, this map lifts to $\Lambda: X \to \bb C^n,$ with $\Lambda(x)= (\lambda_1(x),...,\lambda_n(x))$ and these components define the root functions.
\end{proof}

\begin{prop} \cite[Corollary 3.3]{DP1} Let $X$ be Stonian and let $h_{i,j} \in C(X), i,j=1,...,n$ be such that the matrix-valued function, $H(x)= (h_{i,j}(x))$ is Hermitian. Then there are continuous matrix-valued functions, $U(x)$ and $D(x)$, with $U$ unitary-valued and $D$ diagonal-valued, such that $$H(x)= U(x)^*D(x)U(x).$$
That is, every Hermitian-valued function on a Stonian space can be continuously diagonalized.
\end{prop}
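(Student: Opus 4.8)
The plan is to mimic the proof of the preceding proposition, replacing the root map by the spectral diagonalization map. Let $\cl U_n$ denote the group of $n \times n$ unitary matrices, let $\cl H_n \cong \bb R^{n^2}$ denote the real vector space of $n \times n$ Hermitian matrices, and for $\lambda = (\lambda_1, \ldots, \lambda_n) \in \bb R^n$ let $\Delta(\lambda)$ denote the diagonal matrix with diagonal entries $\lambda_1, \ldots, \lambda_n$. First I would introduce the continuous map $g: \cl U_n \times \bb R^n \to \cl H_n$ defined by $g(U, \lambda) = U^* \Delta(\lambda) U$. By the spectral theorem for Hermitian matrices, every $H \in \cl H_n$ can be written in this form, so $g$ is onto.

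Next I would record the analogue of the properness observation used in the previous proof. Since the entries of $\lambda$ are, up to reordering, the eigenvalues of $g(U,\lambda)$, they are bounded in modulus by the operator norm of $g(U,\lambda)$. Hence if $K \subseteq \cl H_n$ is compact, then there is an $M$ with $\|H\| \le M$ for all $H \in K$, and $g^{-1}(K)$ is a closed subset of the compact set $\cl U_n \times [-M, M]^n$, hence compact. In particular, restricting $g$ to the preimage of any such compact $K$ gives a continuous surjection between compact, Hausdorff spaces.

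Now the hypotheses of projectivity are in place. The matrix-valued function $H$ defines a continuous map $f: X \to \cl H_n$, $f(x) = H(x)$, whose image lies in some compact $K$. Since $X$ is Stonian, it is extremally disconnected and hence projective by Gleason's theorem, so $f$ lifts through $g$ to a continuous map $r: X \to \cl U_n \times \bb R^n$, say $r(x) = (U(x), \lambda(x))$, with $g \circ r = f$. Setting $D(x) = \Delta(\lambda(x))$ then yields a continuous unitary-valued function $U$ and a continuous diagonal-valued function $D$ with $H(x) = U(x)^* D(x) U(x)$, as required.

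The step requiring the most thought is simply the verification that $g$ is onto and proper, which is exactly the content of the spectral theorem together with the bound on the eigenvalues by the norm; once this is in hand the result is immediate. It is worth emphasizing that there is no genuine obstacle here precisely because projectivity is doing the work that one would otherwise expect to be impossible: in general the eigenvalues and eigenvectors of a Hermitian matrix cannot be chosen to depend continuously on the matrix near points where eigenvalues cross, and it is the projectivity of the Stonian space $X$ that produces a global continuous selection in spite of this failure.
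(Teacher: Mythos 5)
Your proof is correct and follows essentially the same route as the paper: both define the spectral decomposition map onto the Hermitian matrices, note that preimages of compact sets are compact, and invoke projectivity of the Stonian space to obtain the lifting. Your version is slightly more careful in two respects the paper glosses over --- using real $n$-tuples for the diagonal part (so the map genuinely lands in the Hermitian matrices) and explicitly restricting to compact sets so that the definition of projectivity, stated for compact Hausdorff spaces, actually applies.
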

\begin{proof} Let $\cl U_n \subseteq M_n$ denote the unitary matrices,  let $\cl D_n \subseteq M_n$ denote the diagonal matrices and let $\cl H_n \subseteq M_n$ denote the Hermitian matrices and consider the continuous onto map, $q: \cl U_n \times \cl D_n \to \cl H_n$ defined by $q((U,D)) = U^*DU.$ Again, it can be shown that if C is a compact subset of $\cl H_n,$ then $q^{-1}(C)$ is compact.
Thus, since $X$ is projective, the continuous function, $H:X \to \cl H_n$ has a continuous lifting, $(U(x), D(x)).$
\end{proof}

In a similar fashion, one can use the projectivity of Stonian spaces
to prove \cite[Theorem 3]{Pea} and
\cite[Theorem 1]{DP2}.

We now present the simplest example of a projective space. Given a locally, compact Hausdorff space
$E$ we let $\beta E$ denote the Stone-Cech compactification of $E$.

\begin{prop} Let $E$ be an arbitrary set, endowed with the discrete
  topology. Then $\beta E$ is projective and consequently, extremally disconnected.
\end{prop}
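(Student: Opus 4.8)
The plan is to exploit the universal property of the Stone-\v{C}ech compactification together with Gleason's theorem. Because $E$ is discrete, every set-map $g : E \to K$ into a compact Hausdorff space $K$ is continuous, and hence extends uniquely to a continuous map $\beta E \to K$; moreover $E$ is dense in $\beta E$. These two facts are the only inputs I would use about $\beta E$.

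To see projectivity directly, suppose $h : Y \to X$ is a continuous surjection and $f : \beta E \to X$ is continuous. Using surjectivity of $h$ and the axiom of choice, for each $e \in E$ I would choose $y_e \in Y$ with $h(y_e) = f(e)$, obtaining a map $g : E \to Y$, $g(e) = y_e$, which is automatically continuous since $E$ carries the discrete topology. When $Y$ is compact and Hausdorff, the universal property extends $g$ to a continuous $r : \beta E \to Y$, and it remains only to verify that $r$ lifts $f$: the maps $h \circ r$ and $f$ agree on the dense subset $E$ by construction, so, $X$ being Hausdorff, they agree on all of $\beta E$. The main obstacle is exactly that the universal property produces extensions only into compact Hausdorff targets, whereas the definition of projectivity quantifies over arbitrary $Y$; I do not see a direct reduction to the compact case, so I would instead obtain the general statement by verifying extremal disconnectedness and invoking Gleason's theorem, which equates the two properties for compact Hausdorff spaces.

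For extremal disconnectedness, the key step is that for every subset $A \subseteq E$ the closure $\overline{A}$ in $\beta E$ is clopen. To prove this I would extend the characteristic function $\chi_A : E \to \{0,1\}$ (continuous, as $E$ is discrete) to a continuous $\widetilde{\chi}_A : \beta E \to \{0,1\}$; then $(\widetilde{\chi}_A)^{-1}(1)$ and $(\widetilde{\chi}_A)^{-1}(0)$ are disjoint clopen sets partitioning $\beta E$ and containing $\overline{A}$ and $\overline{E \setminus A}$ respectively, while $\overline{A} \cup \overline{E \setminus A} = \beta E$ since the left side is closed and contains $E$; comparing these forces $\overline{A} = (\widetilde{\chi}_A)^{-1}(1)$, which is clopen. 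Finally, for an arbitrary open set $U \subseteq \beta E$, density of $E$ gives $U \subseteq \overline{U \cap E}$ and hence $\overline{U} = \overline{U \cap E}$, which is clopen by the previous step; thus $\overline{U}$ is open, $\beta E$ is extremally disconnected, and by Gleason's theorem it is projective.
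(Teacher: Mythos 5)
Your argument is correct, but it takes a different and somewhat longer route than the paper, and the detour is driven by an obstacle that is not actually there. The definition of projectivity in this section lives in the category whose objects are compact, Hausdorff spaces, so the space $Y$ in the lifting problem is itself compact and Hausdorff; the universal property of $\beta E$ therefore applies directly to the map $e \mapsto y_e$, and the first half of your proposal --- choose $y_e \in Y$ with $h(y_e)=f(e)$, extend to $r\colon \beta E \to Y$, and use density of $E$ plus the Hausdorff property of $X$ to get $h\circ r = f$ --- is already a complete proof. This is exactly the paper's argument. (Note also that if one really did allow arbitrary topological $Y$, Gleason's theorem would not rescue you, since it too is a statement about the compact Hausdorff category; so the worry, taken seriously, would defeat both routes.) Your fallback via extremal disconnectedness is nevertheless valid: the verification that $\overline{A}$ is clopen for $A \subseteq E$ via the extended characteristic function, and the reduction of $\overline{U}$ to $\overline{U\cap E}$ by density, are both correct, and Gleason's theorem is available as a quoted result. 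The trade-off is that your route invokes the nontrivial direction of Gleason's theorem (extremally disconnected implies projective), whereas the paper's one-line direct argument needs no input beyond the universal property and in fact runs the logic the other way, obtaining extremal disconnectedness as the \emph{consequence} of projectivity, which is the easy direction.
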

\begin{proof}
Let $P= \beta E, X, Y, h, f$ be as in the above definition. For each $e \in E$ choose $r(e) \in Y$
such that $h \circ r(e) = f(e).$ By the universal property of the Stone-Cech compactifaction, the map
$r$ extends uniquely to a continuous function from $\beta E$ to $Y$, that we still denote by $r$.
Since, $E$ is dense in $\beta E$, by continuity we will have that $h \circ r(p) = f(p)$ for every
$p \in \beta E.$
\end{proof}

The following characterization of projective spaces is often useful.

\begin{prop} Let $P$ be a compact, Hausdorff space. Then $P$ is projective if and only if for every
compact, Hausdorff space $W$ and continuous $g: W \to P,$ onto, there exists a continuous $s: P \to W$
such that $g \circ s(p) =p.$
\end{prop}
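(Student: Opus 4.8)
The plan is to prove the two implications separately, with the forward direction essentially immediate from the definition and the converse carried out by a pullback (fiber product) construction. For the ``only if'' direction, suppose $P$ is projective and let $g : W \to P$ be a continuous surjection from a compact Hausdorff space $W$. I would simply invoke the definition of projectivity with $X = P$, $Y = W$, the given surjection $h = g$, and $f = \mathrm{id}_P : P \to P$; the resulting lifting $r : P \to W$ satisfies $g \circ r(p) = \mathrm{id}_P(p) = p$ for all $p$, so $s := r$ is the desired section. No real work is required here.

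For the converse, suppose every continuous surjection onto $P$ from a compact Hausdorff space admits a section, and let $h : Y \to X$ (onto) and $f : P \to X$ be given as in the definition of projectivity. I would form the fiber product $W = \{ (p,y) \in P \times Y : f(p) = h(y) \}$, regarded as a subspace of $P \times Y$, together with the coordinate projections $\pi_1 : W \to P$ and $\pi_2 : W \to Y$. One checks that $\pi_1$ is onto: given $p \in P$, surjectivity of $h$ supplies a $y \in Y$ with $h(y) = f(p)$, so $(p,y) \in W$ and $\pi_1(p,y) = p$. Applying the hypothesis to $\pi_1$ yields a continuous section $s : P \to W$ with $\pi_1 \circ s = \mathrm{id}_P$. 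Since the first coordinate of $s(p)$ is forced to be $p$, I may write $s(p) = (p, r(p))$ where $r := \pi_2 \circ s : P \to Y$ is continuous, and the membership $(p, r(p)) \in W$ gives exactly $h(r(p)) = f(p)$. Thus $r$ is a lifting of $f$ through $h$, and $P$ is projective.

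The step requiring care, and the one that pins down the category in which the argument lives, is verifying that the hypothesis genuinely applies to $\pi_1 : W \to P$, i.e. that $W$ is compact and Hausdorff. Hausdorffness is automatic, being inherited from the product $P \times Y$. For compactness I would show that $W$ is closed in $P \times Y$: since $X$ is Hausdorff its diagonal $\Delta \subseteq X \times X$ is closed, and $W$ is the preimage of $\Delta$ under the continuous map $(p,y) \mapsto (f(p), h(y))$, hence closed; as a closed subset of the compact space $P \times Y$ it is compact. This is precisely where compactness of $Y$ and Hausdorffness of $X$ are used, so the natural reading is that $X$ and $Y$ range over compact Hausdorff spaces, matching the class of spaces against which the section property is assumed. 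I expect this compactness-and-closedness check to be the only genuine obstacle; the extraction of $r = \pi_2 \circ s$ and the verification $h \circ r = f$ are then formal consequences of the defining relation of the pullback.
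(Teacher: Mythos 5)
Your proof is correct and follows essentially the same route as the paper: the forward direction is the lifting of the identity map, and the converse uses the same fiber product $W=\{(p,y): f(p)=h(y)\}$ with its two coordinate projections. The additional verification that $W$ is closed (hence compact) and Hausdorff is a detail the paper leaves implicit, and you supply it correctly.
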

\begin{proof} Assume that $P$ is projective and let $s$ be a lifting of the identity map on $P.$

Conversely, assume that $P$ has the above property and let $h, f, X$ and $Y$ be as in the definition of
projectivity. Let $W= \{ (p,y) \in P \times Y: f(p) = h(y) \}$ and define $g: W \to P$ by $g(p,y)=p$
and $q: W \to Y$ by $q(p,y)=y.$ If $s: P \to W$ is as above then $r= q \circ s$ is a lifting of $f.$ 
\end{proof}
 
\begin{defn} Let $X$ be a compact, Hausdorff space, we call a pair
  $(C,f)$ a {\bf cover} of $X,$ provided that $C$ is a
  compact, Hausdorff space and $f: C \to X$ is a continuous map that is onto
  $X.$ We call $(C,f)$ an {\bf essential cover} of $X,$ if
  it is a cover and whenever $Y$ is a compact, Hausdorff space, $h:Y
  \to C$ is continuous and $f(h(Y))=X,$ then necessarily $h(Y)=C.$
We call $(C,f)$ a {\bf rigid cover} of $X$, if it is a cover and the
  only continuous map, $h:C \to C$ satisfying $f(h(c))= f(c)$ for
  every $c \in C$ is the
  identity map.
\end{defn}

\begin{prop} Let $X$ be a compact, Hausdorff space and let $(C,f)$ be
  an essential cover of $X$. Then $(C,f)$ is a rigid cover of $X.$
\end{prop}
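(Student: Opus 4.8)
The plan is to first restate the essentiality hypothesis in a more usable contrapositive form and then argue that a non-identity $h$ would violate it. Concretely, I would observe that $(C,f)$ being an essential cover is equivalent to the \emph{irreducibility} of $f$: no proper closed subset $D \subsetneq C$ satisfies $f(D) = X$. This is immediate from the definition by taking $Y = D$ together with the inclusion map $D \hookrightarrow C$, since a continuous image of a compact space is closed in a Hausdorff space, so every map into $C$ whose image is proper has proper closed image. Thus it suffices to show: if $h : C \to C$ is continuous with $f \circ h = f$ and $h \neq \mathrm{id}_C$, then some proper closed subset of $C$ is still carried onto all of $X$ by $f$, contradicting irreducibility.

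Next, I would pick a point $c_0$ with $h(c_0) \neq c_0$ and use the Hausdorff property to separate $c_0$ from $h(c_0)$. Choosing disjoint open sets $U' \ni c_0$ and $V \ni h(c_0)$ and then intersecting $U'$ with $h^{-1}(V)$ (open by continuity of $h$), I obtain an open neighborhood $U_0$ of $c_0$ satisfying $h(U_0) \subseteq V$ and $U_0 \cap V = \emptyset$. The candidate proper closed set is then $D = C \setminus U_0$; it is closed and proper precisely because $c_0 \in U_0 \neq \emptyset$. By construction $V \subseteq C \setminus U_0 = D$, and hence $h(U_0) \subseteq D$.

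The crux is to verify $f(D) = X$, and this is where the fiber-collapsing identity $f \circ h = f$ does the real work. For any $c \in U_0$ we have $h(c) \in V \subseteq D$, so $f(c) = f(h(c)) \in f(D)$; therefore $f(U_0) \subseteq f(D)$, and consequently $X = f(C) = f(U_0) \cup f(D) = f(D)$. This contradicts irreducibility and forces $h = \mathrm{id}_C$. I expect the only genuine subtlety to be the packaging of the separation step so that $h$ \emph{redirects} the points of $U_0$ into the complement $D$ while $U_0$ itself is carved out of $D$: once $U_0$ is chosen with $h(U_0) \subseteq V \subseteq C \setminus U_0$, the relation $f \circ h = f$ makes the containment $f(U_0) \subseteq f(D)$ automatic, so nothing beyond "Hausdorff, plus closed image of a compact set" is needed, and in particular I do not expect to require surjectivity of $h$, normality, or any Zorn/compactness argument over the fibers.
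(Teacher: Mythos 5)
Your proof is correct, and it takes a genuinely more economical route than the paper's. The paper first uses essentiality to show that $h$ is surjective, then proves the auxiliary claim that $U \cap h^{-1}(U) \neq \emptyset$ for every nonempty open $U$ (the proof of that claim pulls points of $U$ \emph{back} along $h$, which is where surjectivity is needed), and finally upgrades this to $h = \mathrm{id}_C$ by a net-convergence argument in the Hausdorff space $C$. You instead localize at a single point $c_0$ with $h(c_0) \neq c_0$, carve out the open set $U_0 = U' \cap h^{-1}(V)$ satisfying $h(U_0) \subseteq V \subseteq C \setminus U_0$, and push \emph{forward}: the identity $f = f \circ h$ gives $f(U_0) \subseteq f(C \setminus U_0)$ directly, so the proper closed set $D = C \setminus U_0$ already covers $X$, contradicting essentiality (correctly reformulated as irreducibility via the inclusion $D \hookrightarrow C$). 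This forward-image argument eliminates both the surjectivity step and the net argument, at the cost of being a single monolithic contradiction rather than isolating the reusable intermediate fact that every nonempty open set meets its $h$-preimage. Both proofs rest on the same mechanism --- if $h$ moves a point, then $f \circ h = f$ lets you delete a neighborhood of that point without losing surjectivity of $f$ --- but yours packages it more tightly; there is no gap.
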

\begin{proof}
Let $h: C \to C$ satisfy $f(h(c)) = f(c)$ for every $c \in C.$ Let
$C_1=h(C)$ which is a compact subset of $C$
that still maps onto $X$. The inclusion map of $i:C_1 \to C$
satisfies, $f(i(C_1))=X$ and hence must be onto $C$. Thus, $h(C)=C.$ 

Next, we claim that if
$U \subseteq C$ is any non-empty open set, then $U \cap h^{-1}(U)$ is non-empty. For assume to the contrary,
and let $F = C\backslash U.$ Then $F$ is compact and given any $c \in U$ there exists $y \in h^{-1}(U)$ with $h(y)=c$.
Hence, $y \in F$ and $f(c) = f(h(y))= f(y)$. Thus, $f(F)=X,$ again contradicting the essentiality
of $C$. Thus, for every open set $U$, we have that $U \cap h^{-1}(U)$
is non-empty. 

Now fix any
$c \in C$ and for every neighborhood $U$ of $c$ pick $x_U \in U \cap h^{-1}(U).$ We have that the
net $\{ x_U \}$ converges to $c.$ Hence, by continuity, $\{ h(x_U) \}$ converges to $h(c).$ But since
$h(x_U) \in U$ for every $U,$ we also have that $\{ h(x_U) \}$
converges to $c.$ Thus, $h(c) =c$ and since $c$ was arbitrary,
$C$ is rigid.
\end{proof}

We will see that the converse holds when $C$ is also projective.

The following result shows that projective covers always
exist. Gleason's proof of this result \cite{Gl} used that every unital, abelian
$C^*$-algebra, $C(X)$ could be embedded into a complete Boolean
algebra and then appealed to a theorem of Stone's \cite{St} that
the maximal ideal space of a complete Boolean algebra is extremally disconnected.

\begin{prop} \label{2.12} Let $X$ be a compact, Hausdorff space, then there exists a projective space $P$ and a
continuous $f: P \to X,$ onto.
\end{prop}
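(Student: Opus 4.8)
The plan is to build $P$ directly from $X$ by discretizing and then appealing to the fact, established just above, that $\beta E$ is projective whenever $E$ carries the discrete topology. Write $X_d$ for the underlying set of $X$ equipped with the discrete topology, and let $\iota \colon X_d \to X$ be the identity map on points. Since $X_d$ is discrete, every function out of it is continuous, so $\iota$ is continuous. By the universal property of the Stone--\v{C}ech compactification, $\iota$ extends to a continuous map $f \colon \beta X_d \to X$. I would then set $P = \beta X_d$, which is projective (hence extremally disconnected) by the preceding proposition.

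It remains to check that $f$ is onto. Here I would use that $X_d$ sits as a dense subset of $\beta X_d$ and that $f$ restricts to $\iota$ on $X_d$. Because $\iota(X_d) = X$ as sets, the restriction $f|_{X_d}$ is already surjective onto $X$; thus $f$ is onto. Alternatively, and just as cleanly, $f(\beta X_d)$ is a compact (hence closed) subset of $X$ containing the dense image $\iota(X_d) = X$, so it equals $X$. Either way the surjectivity is immediate.

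I expect there to be essentially no hard step: the only thing that could be mistaken for an obstacle is verifying surjectivity of $f$, and that collapses at once since the discretization map is a bijection on points. The real content is entirely borrowed from the two earlier results---projectivity of $\beta E$ for discrete $E$ and Gleason's theorem identifying projectivity with extremal disconnectedness---so the proof amounts to choosing the right auxiliary discrete space. One caveat worth flagging is that this construction is wildly non-minimal: $\beta X_d$ is typically enormous, and it is \emph{not} the essential (minimal) projective cover. Producing that minimal cover would require cutting $\beta X_d$ down, which is a separate matter; for the bare existence statement asserted here, the discretization construction suffices.
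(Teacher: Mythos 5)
Your proposal is correct and is exactly the paper's construction: the paper's proof also takes $E = X$ with the discrete topology and sets $P = \beta E$, relying on the preceding proposition that $\beta E$ is projective. The surjectivity check you spell out is immediate for the reason you give, and your caveat about non-minimality is precisely why the paper follows this with the passage to a minimal compact subset in Theorem \ref{existhm}.
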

\begin{proof} Let $E=X,$ endowed with the discrete topology and let $P= \beta E.$
\end{proof}

The following results relate the concepts of essential cover and rigid cover and give us a means of
obtaining Gleason's projective envelope that stays entirely in the context of topology, hence
avoiding the deep results from Boolean algebras that Gleason used.

\begin{prop} \label{2.13} Let $(C,f)$ be a cover of $X$ with $C$ a projective space. Then $(C,f)$ is an essential  cover if and only if $(C,f)$ is a rigid cover.
\end{prop}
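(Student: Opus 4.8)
The forward implication requires no new work: the preceding proposition shows that \emph{every} essential cover is rigid, with no projectivity hypothesis. So the plan is to establish only the converse, that when $C$ is projective a rigid cover is essential, and this is where projectivity enters.

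To prove the converse, I would start from the definition of essentiality. Suppose $(C,f)$ is a rigid cover with $C$ projective, let $Y$ be compact Hausdorff, and let $h:Y \to C$ be continuous with $f(h(Y)) = X$. Set $C_1 = h(Y)$, which is a compact (hence closed) subset of $C$; the goal is to show $C_1 = C$. The key observation is that the restriction $f|_{C_1}: C_1 \to X$ is continuous and onto, precisely because $f(C_1) = f(h(Y)) = X$. This onto map is exactly the kind of map against which projectivity of $C$ can be applied.

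The main step is then to feed this into the lifting property. Applying the definition of projectivity to the onto map $f|_{C_1}: C_1 \to X$ and the map $f: C \to X$ itself, I obtain a continuous lifting $r: C \to C_1$ satisfying $f|_{C_1}\bigl(r(c)\bigr) = f(c)$ for every $c \in C$. Composing with the inclusion $i: C_1 \hookrightarrow C$ produces a continuous self-map $i \circ r: C \to C$ with $f\bigl(i(r(c))\bigr) = f(r(c)) = f(c)$ for all $c$. This is precisely the hypothesis under which rigidity applies, so $i \circ r$ must be the identity map of $C$.

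Finally I would extract the conclusion: since $i \circ r = \mathrm{id}_C$ and the image of $i \circ r$ is contained in $C_1$, every point of $C$ lies in $C_1$, forcing $C_1 = C$. Hence $h(Y) = C$, which is exactly essentiality. The only real subtlety to get right is matching the roles in Gleason's definition correctly, namely taking the image $C_1$ (rather than $Y$ itself) as the space to lift into and the restricted map $f|_{C_1}$ as the onto map; once that identification is made, the argument is short, and I do not expect any genuine obstacle beyond bookkeeping the inclusion $C_1 \subseteq C$.
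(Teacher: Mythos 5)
Your proof is correct and follows essentially the same route as the paper: use projectivity of $C$ to lift $f$ against an onto map determined by $h$, obtain a self-map of $C$ commuting with $f$, and invoke rigidity to force it to be the identity. The only difference is that you lift into the image $C_1 = h(Y)$ via the restriction $f|_{C_1}$, whereas the paper lifts into $Y$ itself via the onto map $f\circ h$ and concludes from $h\circ s = \mathrm{id}_C$ that $h$ is onto; this is an immaterial bookkeeping variation.
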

\begin{proof}  We already have that an essential cover is always a rigid cover. So assume that $(C,f)$ is a rigid cover. Let $h: Y \to C$ with $f(h(Y))=X.$ Since $C$ is projective, there exists a map $s: C \to Y$
with $(f \circ h) \circ s = f.$ We have $h \circ s: C \to C$ and $f(h \circ s(c)) = f(c)$ and so by
rigidity, $h \circ s(c) =c$ for every $c \in C.$ In particular, $h$ must be onto and so $C$ is essential.
\end{proof}

\begin{remark} In the linear theory, essential and injective is,
  generally, equivalent to rigid and injective \cite{Co}.
The above result shows the analogue in the topological setting. 
However, we will later see that when one includes a G-action, 
then G-projective and G-essential do not imply G-rigid.
\end{remark}

Note that if $(C,f)$ is a cover of $X,$ then the compact subsets of C that still map onto X satisfy
the hypotheses of Zorn's lemma, so that one can always choose a minimal such subset.

\begin{prop} \label{2.15} Let $(Y,f)$ be a cover of $X$ and let $C \subset Y$ be a minimal, compact subset of $Y$ that 
maps onto $X.$ Then $(C,f)$ is a rigid, essential cover of $X.$
\end{prop}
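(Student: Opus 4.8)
The plan is to prove essentiality directly from the minimality of $C$, and then obtain rigidity for free by quoting the earlier result that every essential cover is automatically a rigid cover. So the real work is a single short argument.

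First I would check that $(C,f)$ — where $f$ now denotes the restriction of the original map to $C$ — is genuinely a cover: $C$ is compact by hypothesis, it is Hausdorff as a subspace of $Y$, and it maps onto $X$ by the very definition of a compact subset of $Y$ that still maps onto $X$. This is routine and just fixes the setup.

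For essentiality, I would take an arbitrary compact, Hausdorff space $W$ together with a continuous map $h: W \to C$ satisfying $f(h(W)) = X$, and show that $h(W) = C$. The key observation is that $h(W)$ is a compact subset of $C$ — it is the continuous image of the compact space $W$ sitting inside the Hausdorff space $Y$ — and that the hypothesis $f(h(W)) = X$ says precisely that $h(W)$ is a compact subset of $Y$ that still maps onto $X$. Since $h(W) \subseteq C$ and $C$ was chosen to be \emph{minimal} among all compact subsets of $Y$ mapping onto $X$, minimality forces $h(W) = C$. This is exactly the essentiality condition in the definition.

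Having shown that $(C,f)$ is an essential cover, rigidity follows immediately from the earlier proposition asserting that every essential cover of a compact, Hausdorff space is a rigid cover; no further argument is needed. I do not expect a genuine obstacle: the entire content is the step where the compact image $h(W)$ is recognized as a legitimate competitor against which the minimality of $C$ can be invoked, and the only ancillary facts to verify are that $h(W)$ is compact and that it maps onto $X$, both of which are standard.
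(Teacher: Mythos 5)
Your proof is correct and follows the same route as the paper: essentiality is obtained by noting that $h(W)$ is a compact subset of $Y$ mapping onto $X$, so minimality forces $h(W)=C$, and rigidity then follows from the earlier proposition that essential covers are rigid. No issues.
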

\begin{proof} First, we prove essential. Given any compact, Hausdorff space $Z$ and $h: Z \to C$ such that
$f(h(Z))=X$, we have that $h(Z) \subseteq C$ is compact and hence $h(Z)=C$ by minimality.

Since, $(C,f)$ is an essential cover of $X$, by the above results it
is also a rigid cover.
\end{proof}

The following result is only slightly different from results of Gleason and Gonshor. However, the proof below
eliminates the need for the use of Stone's representation theorem for complete Boolean algebras and proves that the space we seek is really a minimal projective space that is a cover.

\begin{thm}\label{existhm} Let $X$ be a compact, Hausdorff space, let $(P,f)$ be any
  projective space that is a cover of $X$ and let $C \subseteq P$ be a minimal,
  compact subset among all sets satisfying $f(C)=X.$ Then $(C,f)$ is a
  rigid, essential projective cover of $X$. Moreover, if $(C', f')$ is a cover of $X$ that is projective and either rigid or essential, then there exists  a homeomorphism
$h: C \to C'$ such that $f'(h(c)) = f(c)$ for every $c \in C.$
\end{thm}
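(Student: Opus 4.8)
For the existence part, I would note that this is almost immediate from the results already established. Since $P$ is projective and $C \subseteq P$ is a minimal compact subset with $f(C) = X$, Proposition 2.15 (applied with $Y = P$) tells me directly that $(C,f)$ is a rigid, essential cover of $X$. The one additional thing to verify is that $C$ is itself projective. Here I would use the internal characterization of projectivity from the proposition just before Definition 2.9: a compact Hausdorff space is projective iff every continuous surjection onto it from a compact Hausdorff space admits a continuous section. Since $P$ is projective and $C$ sits inside $P$ as a rigid, essential cover of $X$, I would exploit the lifting property of $P$: given a surjection $g : W \to C$, I compose with the inclusion $C \hookrightarrow P$ and lift along $P$'s projectivity, then push the lift back into $C$ using rigidity/essentiality of $(C,f)$ to land in $C$. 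This shows $C$ is projective, completing the first assertion.

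The main work is the \textbf{uniqueness up to homeomorphism}, and this is where I expect the real obstacle to lie. Suppose $(C', f')$ is another projective cover that is rigid or essential. By Proposition 2.13, since $C'$ is projective, rigid and essential are equivalent for it, so I may assume $(C', f')$ is both. The natural strategy is to build maps in both directions and show they are mutually inverse. Using projectivity of $C$, lift $f : C \to X$ through the surjection $f' : C' \to X$ to obtain a continuous $\alpha : C \to C'$ with $f' \circ \alpha = f$. Symmetrically, using projectivity of $C'$, lift $f'$ through $f$ to get $\beta : C' \to C$ with $f \circ \beta = f'$. Then $\beta \circ \alpha : C \to C$ satisfies $f \circ (\beta \circ \alpha) = f$, so by rigidity of $(C,f)$ it is the identity on $C$; likewise $\alpha \circ \beta$ is the identity on $C'$ by rigidity of $(C',f')$. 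Hence $\alpha$ is a homeomorphism with $f' \circ \alpha = f$, as required.

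The delicate point to get right is ensuring that rigidity applies on \emph{both} sides, which is exactly why the hypothesis on $(C',f')$ must upgrade to ``rigid and essential.'' If $(C',f')$ is only assumed essential (resp. only rigid), Proposition 2.13 is what lets me conclude it is also rigid (resp. essential), but that proposition needs $C'$ projective --- which is part of the hypothesis. I would flag that without projectivity of $C'$ this uniqueness argument would break down, since the two-sided lifting that produces $\alpha$ and $\beta$ relies essentially on projectivity of each cover. The homeomorphism $\alpha$ is then the desired map satisfying $f'(\alpha(c)) = f(c)$ for every $c \in C$, and the whole argument stays entirely within the category of topological spaces, as promised.
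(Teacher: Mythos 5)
Your uniqueness argument is correct and is essentially the paper's: the paper runs the two cases (rigid, essential) separately rather than first invoking Proposition 2.13 to upgrade $(C',f')$ to both, but the content --- two lifts $\alpha,\beta$ commuting with the covering maps, plus rigidity on each side --- is identical. Likewise your appeal to Proposition 2.15 for the rigid/essential part of the existence claim matches the paper.

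The gap is in your verification that $C$ is projective. As described, the lifting does not type-check: composing a surjection $g : W \to C$ with the inclusion $C \hookrightarrow P$ produces a map $W \to P$ that is \emph{not} onto $P$, so projectivity of $P$ gives you nothing to lift through; and if instead you try to take $g$ itself as the surjection in the definition of projectivity of $P$, you would need a map $P \to C$ to lift --- which is exactly what you do not yet have. The missing ingredient is the covering map $f$. The paper's route: apply projectivity of $P$ to the surjection $f|_C : C \to X$ and the map $f : P \to X$ to obtain $h : P \to C$ with $f \circ h = f$; essentiality of $(C,f)$ forces $h(P) = C$ and rigidity forces $h(c)=c$ for $c \in C$, so $h$ is a retraction of $P$ onto $C$, and a retract of a projective space is projective. (Equivalently, to salvage your section criterion: given $g : W \to C$ onto, lift $f : P \to X$ through the surjection $f \circ g : W \to X$ to get $r : P \to W$; then $g \circ r$ restricted to $C$ is a self-map of $C$ commuting with $f$, so rigidity gives $g(r(c)) = c$, i.e., $r|_C$ is the desired section.) Either repair is short, but some version of it is needed; ``push the lift back into $C$ using rigidity/essentiality'' does not by itself substitute for it.
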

\begin{proof} By Proposition \ref{2.12} we know that a cover $(P,f)$ of $X$ exists with $P$ projective. If we let
$C \subseteq P$ be any minimal compact subset of $P$ that still maps onto $X$, then $(C,f)$ will be
a rigid, essential cover by the above. Since, $P$ is projective there exists $h: P \to C$ such
that $f(h(p)) = f(p)$ for every $p \in P.$ Since $C$ is essential, $h(P)=C$ and since $C$ is rigid,
$h(c)=c$ for every $c \in C.$ Thus, $h$ is an idempotent map of $P$ onto $C$, that is, $C$ is a retract
of $P$. From this it follows easily that $C$ is projective.

Finally, if $(C',f')$ is a projective cover of $X$, then using the projectivity of $C$ and $C'$ one obtains
maps $h:C \to C'$ and $h': C' \to C$ such that $f' \circ h =f$ and $f \circ h' = f'.$ From rigidity
the rigidity of $C$,
it follows that $h' \circ h$ is the identity on $C$ and hence $h$ is one-to-one. If $C'$ is rigid, then
$h \circ h'$ is the identity on $C'$ and so $h$ and $h'$ are mutual inverses and homeomorphisms.
If $C'$ is essential, then $h(C)=C'$ and so $h$ is one-to-one and hence a homeomorphism.
\end{proof}

\begin{defn} Let $X$ be a topological space, we call $(C,f)$ a {\bf
    projective cover of X,} provided that $(C,f)$ is an essential cover of $X$ and $C$ is projective.
\end{defn}

Thus, by the above results, every compact, Hausdorff space $X$ has a
projective cover $(C,f)$ and $(C,f)$ is also a rigid cover. Moreover, the projective
cover is unique, up to homeomorphisms that commute with the covering maps and we have a prescription for how to obtain it.

Since the category of operator systems strictly contains the category of abelian $C^*$-algebras
and contains more injective objects, it is conceivable that the injective envelope of a $C(X)$ space
in this larger category could be smaller. Hamana\cite{H2} remarks that the two are the same.  Since we will
be using this fact, we prove it below. Again, the use of projectivity leads to a simpler proof.

\begin{cor} Let $X$ be a compact, Hausdorff space and let $(K,f)$ be its  projective
cover. Then $C(K)$ is *-isomorphic to $I(C(X)),$ Hamana's injective envelope of $C(X)$ in the category of operator systems
and unital, completely positive maps, via a *-isomorphism that fixes $C(X).$
\end{cor}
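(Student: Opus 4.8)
The plan is to show that the cover $(K,f)$ realizes $C(K)$ as an injective envelope of $C(X)$ in the operator-system sense, and then to deduce the statement by combining the essential uniqueness of injective envelopes with the topological uniqueness of the projective cover established in Theorem \ref{existhm}. First, since $K$ is projective, the equivalence of $(1)$ and $(3)$ in the preceding theorem shows that $C(K)$ is an injective operator system, while the surjection $f$ induces by composition a unital $\ast$-monomorphism $\pi_f\colon C(X)\to C(K)$. Thus $C(K)$ is an injective extension of $C(X)$, and to prove the corollary it suffices to see that this extension is the injective envelope.

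Next I would locate $I(C(X))$ inside $C(K)$. By Hamana's theory there is a minimal $C(X)$-projection, that is, a unital completely positive idempotent $E\colon C(K)\to C(K)$ fixing $C(X)$, whose range, equipped with the Choi--Effros product $a\circ b=E(ab)$, is completely order isomorphic to $I(C(X))$ by a map fixing $C(X)$. The key simplification in the commutative setting is that, because $C(K)$ is abelian, this product is automatically commutative; hence $(\operatorname{Ran}E,\circ)$ is a commutative injective $C^*$-algebra, say $C(K_0)$, and by the equivalence of $(1)$ and $(3)$ the space $K_0$ is projective. Moreover $C(X)$ is a unital $\ast$-subalgebra of $C(K_0)$, since on $C(X)$ the Choi--Effros product agrees with the original one ($E$ fixes $C(X)$, which is multiplicatively closed), so this inclusion dualizes to a continuous surjection $f_0\colon K_0\to X$.

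It then remains to identify $(K_0,f_0)$ with the given cover $(K,f)$. Because $C(K_0)=I(C(X))$ is a rigid injective extension of $C(X)$, the only unital completely positive self-map of $C(K_0)$ fixing $C(X)$ is the identity; a fortiori the only $\ast$-endomorphism fixing $C(X)$ is the identity, which says exactly that $(K_0,f_0)$ is a rigid cover of $X$ in the sense defined above. Since $K_0$ is projective, Proposition \ref{2.13} upgrades rigid to essential, so $(K_0,f_0)$ is a projective cover of $X$. By the uniqueness clause of Theorem \ref{existhm} there is a homeomorphism $h\colon K\to K_0$ with $f_0\circ h=f$, and the induced $\ast$-isomorphism $C(K_0)\to C(K)$ sends $g\circ f_0$ to $g\circ f$, hence fixes $C(X)$. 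As $C(K_0)=I(C(X))$ as a $C^*$-algebra, this is the desired $\ast$-isomorphism.

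The main obstacle is the second step: one must be certain that the range of the minimal projection really is the injective envelope and that its Choi--Effros structure is an honest commutative $C^*$-algebra into which $C(X)$ embeds $\ast$-homomorphically. This rests on Hamana's construction of $I(C(X))$ as the range of a minimal $C(X)$-projection and on the Choi--Effros product theorem. The genuinely new and delicate points are the observation that the abelian structure of $C(K)$ forces commutativity of the induced product, and the remark that operator-system rigidity of the envelope descends to the purely topological rigidity needed to invoke Theorem \ref{existhm}. Once these are in place, the conclusion is a direct translation of injectivity back into the uniqueness of the projective cover.
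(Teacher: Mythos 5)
Your proposal is correct and follows essentially the same route as the paper: realize $I(C(X))$ as the range of a completely positive projection on the injective abelian algebra $C(K)$, use the Choi--Effros product to see it is an abelian $C^*$-algebra $C(K_0)$ with $K_0$ projective, and then match $K_0$ with $K$ via projectivity and rigidity. The only cosmetic difference is that you route the final identification through the uniqueness clause of Theorem~\ref{existhm} (after explicitly checking that operator-system rigidity of the envelope gives topological rigidity of $(K_0,f_0)$), whereas the paper runs that projectivity-plus-rigidity argument inline.
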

\begin{proof} Since $C(K)$ is injective in the category of operator systems and composition with $f$
induces a *-monomorphism $\pi_f : C(X) \to C(K),$ there will exist a complete order isometry of $I(C(X))$
into $C(K)$ together with a completely positive projection of $C(K)$ onto this subspace. This projection
endows the image of $I(C(X))$ with an abelian product, making $I(C(X))$ into an abelian $C^*$-algebra.
Thus, $I(C(X))= C(Z)$ for some projective space $Z$ and  the inclusion of $C(X)$ into $C(Z)$ is given
by an onto continuous function $h: Z \to X.$ Projectivity of $Z$ and $K$ yields continuous functions between
$K$ and $Z$ and rigidity forces these maps to be one-to-one and onto.
\end{proof}

One consequence of the preceding result is that if $\mathcal{A}$ is a
commutative unital C*-algebra contained in a commutative unital injective
C*-algebra $\mathcal{B}$, then there is a copy of the injective hull of
$\mathcal{A}$ that contains $\mathcal{A}$ and sits inside $\mathcal{B}$ as a
C*-subalgebra. However, the commutativity of $\mathcal{B}$ is not necessary here.

\begin{prop}
Suppose $\mathcal{S}\subset B\left(  H\right)  $ is a unital injective
operator system, $\mathcal{C}$ is a C*-subalgebra of $\mathcal{S}$ and
$\mathcal{C}^{\prime}$ is injective. Then $\mathcal{S}\cap\mathcal{C}^{\prime
}$ is injective.
\end{prop}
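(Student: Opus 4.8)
The plan is to realize $\mathcal{S}\cap\mathcal{C}'$ as the range of a unital completely positive (UCP) idempotent on $B(H)$, since an operator system sitting inside $B(H)$ is injective exactly when it is the image of such a projection (here $\mathcal{C}'$ denotes the commutant of $\mathcal{C}$ in $B(H)$). Because $\mathcal{S}$ is injective and $\mathcal{S}\subset B(H)$, applying injectivity to the identity map on $\mathcal{S}$ together with the inclusion $\mathcal{S}\subset B(H)$ yields a UCP projection $\Phi:B(H)\to\mathcal{S}$ that fixes $\mathcal{S}$ pointwise; likewise, injectivity of $\mathcal{C}'$ provides a UCP projection $\Psi:B(H)\to\mathcal{C}'$ fixing $\mathcal{C}'$. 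Conversely, any UCP projection onto an operator system witnesses its injectivity via Arveson's extension theorem, so producing such a projection onto $\mathcal{S}\cap\mathcal{C}'$ will finish the proof.

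The crucial observation is that $\mathcal{C}$ lies in the multiplicative domain of $\Phi$. Indeed, for $c\in\mathcal{C}\subseteq\mathcal{S}$ we have $\Phi(c)=c$, and since $\mathcal{C}$ is a C*-algebra the elements $c^*c$ and $cc^*$ again lie in $\mathcal{C}\subseteq\mathcal{S}$, so $\Phi(c^*c)=c^*c=\Phi(c)^*\Phi(c)$ and similarly for $cc^*$. By Choi's multiplicative-domain theorem, $\Phi$ is therefore a $\mathcal{C}$-bimodule map, meaning $\Phi(cx)=c\,\Phi(x)$ and $\Phi(xc)=\Phi(x)\,c$ for all $c\in\mathcal{C}$ and $x\in B(H)$.

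From the bimodule property I would deduce that $\Phi$ carries the commutant into itself. If $x\in\mathcal{C}'$, then for every $c\in\mathcal{C}$ we have $c\,\Phi(x)=\Phi(cx)=\Phi(xc)=\Phi(x)\,c$, so $\Phi(x)$ commutes with all of $\mathcal{C}$, i.e.\ $\Phi(x)\in\mathcal{C}'$. Combined with $\Phi(B(H))\subseteq\mathcal{S}$, this gives $\Phi(\mathcal{C}')\subseteq\mathcal{S}\cap\mathcal{C}'$.

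Finally I would set $E=\Phi\circ\Psi:B(H)\to B(H)$. This is UCP as a composition of UCP maps, and its range is contained in $\Phi(\mathcal{C}')\subseteq\mathcal{S}\cap\mathcal{C}'$. For $y\in\mathcal{S}\cap\mathcal{C}'$ we have $\Psi(y)=y$ (as $y\in\mathcal{C}'$) and then $\Phi(y)=y$ (as $y\in\mathcal{S}$), so $E$ fixes $\mathcal{S}\cap\mathcal{C}'$ pointwise; since its range already lies in $\mathcal{S}\cap\mathcal{C}'$, $E$ is idempotent with range exactly $\mathcal{S}\cap\mathcal{C}'$. Hence $\mathcal{S}\cap\mathcal{C}'$ is the image of a UCP projection on $B(H)$ and is therefore injective. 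The one step requiring genuine care is the multiplicative-domain argument, which is precisely what forces $\Phi$ to respect the commutant; the remainder is routine composition of completely positive maps.
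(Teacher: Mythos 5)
Your proof is correct and follows essentially the same route as the paper: both arguments take completely positive projections $\Phi$ onto $\mathcal{S}$ and $\Psi$ onto $\mathcal{C}'$, use the fact that $\Phi$ is a $\mathcal{C}$-bimodule map to conclude $\Phi(\mathcal{C}')\subseteq\mathcal{C}'$, and then check that $\Phi\circ\Psi$ is a completely positive projection onto $\mathcal{S}\cap\mathcal{C}'$. The only difference is that you spell out the multiplicative-domain justification for the bimodule property, which the paper simply asserts.
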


\begin{proof}
Let $\varphi:B\left(  H\right)  \rightarrow\mathcal{S}$, and $\psi:B\left(
H\right)  \rightarrow\mathcal{C}^{\prime}$ be completely positive projections.
Since $\varphi$ is a $\mathcal{C}$-bimodule map, $\varphi\left(
\mathcal{C}^{\prime}\right)  \subset\mathcal{C}^{\prime}.$ Thus $\varphi
\circ\psi:B\left(  H\right)  \rightarrow\mathcal{S}\cap\mathcal{C}^{\prime}.$
However, both $\varphi$ and $\psi$ fix $\mathcal{S}\cap\mathcal{C}^{\prime},$
hence $\varphi\circ\psi$ is a completely positive projection onto
$\mathcal{S}\cap\mathcal{C}^{\prime}.$ Thus $\mathcal{S}\cap\mathcal{C}%
^{\prime}$ is injective.
\end{proof}

\begin{cor}
Suppose $\mathcal{B}$ is a unital injective C*-algebra. Then every maximal
abelian C*-subalgebra is injective.
\end{cor}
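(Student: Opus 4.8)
The plan is to realize this as a direct application of the preceding Proposition, with the C*-subalgebra $\mathcal{C}$ there taken to be the maximal abelian subalgebra itself. First I would fix a faithful unital representation $\mathcal{B} \subseteq B(H)$; since $\mathcal{B}$ is injective, there is a completely positive projection $\varphi : B(H) \to \mathcal{B}$, so $\mathcal{B}$ is a unital injective operator system in $B(H)$ in exactly the sense required by the Proposition. Let $\mathcal{C} \subseteq \mathcal{B}$ be a maximal abelian C*-subalgebra, and let $\mathcal{C}'$ denote its commutant computed in $B(H)$.

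The first key step is the identity $\mathcal{C} = \mathcal{B} \cap \mathcal{C}'$. The inclusion $\mathcal{C} \subseteq \mathcal{B} \cap \mathcal{C}'$ is immediate, since $\mathcal{C}$ is abelian. For the reverse, if $b \in \mathcal{B}$ commutes with $\mathcal{C}$, then so do its self-adjoint and anti-self-adjoint parts; for such a self-adjoint element the C*-algebra generated by $\mathcal{C}$ together with it is abelian and contains $\mathcal{C}$, so maximality forces it to coincide with $\mathcal{C}$, whence $b \in \mathcal{C}$. Thus the relative commutant $\mathcal{B} \cap \mathcal{C}'$ equals $\mathcal{C}$, which is the characterization of maximal abelian subalgebras I would exploit.

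The second, and main, step is to verify the remaining hypothesis of the Proposition, namely that $\mathcal{C}'$ is injective. Here I would pass to the weak-operator closure $\mathcal{C}''$, which is an abelian von Neumann algebra and hence injective, and then invoke the classical fact that a von Neumann algebra is injective if and only if its commutant is injective; applied to $\mathcal{C}''$ this yields that $\mathcal{C}' = (\mathcal{C}'')'$ is injective. I expect this to be the only genuinely non-elementary input in the argument, since it imports a structural result about von Neumann algebras rather than following from the topological machinery of the earlier sections; everything else is formal.

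With these two facts in hand, the Proposition applies verbatim: taking $\mathcal{S} = \mathcal{B}$ and the distinguished C*-subalgebra to be $\mathcal{C}$, we have that $\mathcal{C}'$ is injective, and therefore $\mathcal{S} \cap \mathcal{C}' = \mathcal{B} \cap \mathcal{C}' = \mathcal{C}$ is injective. This gives injectivity of the maximal abelian C*-subalgebra and completes the proof.
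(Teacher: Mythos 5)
Your proof is correct, and it reaches the conclusion through the same Proposition and the same structural identity (the relative commutant of a maximal abelian subalgebra of $\mathcal{B}$ is that subalgebra itself), but the key step is handled genuinely differently. You verify the hypothesis ``$\mathcal{C}'$ is injective'' by passing to $\mathcal{C}''$, noting it is an abelian von Neumann algebra and hence injective, and then invoking the theorem that a von Neumann algebra is injective if and only if its commutant is; equivalently, that the commutant of an abelian von Neumann algebra is (type I, hence) injective. That is a substantially deeper external input than anything else in the argument, and it is exactly the kind of machinery this paper is trying to avoid. The paper instead enlarges $\mathcal{C}$ to a maximal abelian C*-subalgebra $\mathcal{M}$ of all of $B(H)$, so that the commutant in question is $\mathcal{M}'=\mathcal{M}$ itself: an abelian von Neumann algebra, whose injectivity is elementary from the order-completeness criterion (condition (4) of Theorem 2.4), and then concludes that $\mathcal{B}\cap\mathcal{M}=\mathcal{C}$ is injective. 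The trade-off is that your version applies the Proposition exactly as stated (you exhibit a C*-subalgebra $\mathcal{C}\subseteq\mathcal{B}$ with $\mathcal{C}'$ injective), whereas the paper's $\mathcal{M}$ need not lie inside $\mathcal{B}$, so its application requires a one-line adaptation of the Proposition's proof (one still has $\varphi(\mathcal{M})\subseteq\varphi(\mathcal{C}')\subseteq\mathcal{B}\cap\mathcal{C}'=\mathcal{C}$, so $\varphi\circ\psi$ projects onto $\mathcal{C}$); but the paper's route stays entirely within elementary means. If you want to keep your argument self-contained in the spirit of the paper, replace your appeal to the injectivity-of-commutants theorem by the MASA-extension trick.
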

\begin{proof} Regard $\cl B$ as a $C^*$-subalgebra of $B(\cl H)$ and let $\cl C \subseteq \cl B$
be a maximal abelian $C^*$-subalgebra of $\cl B$. Extend $\cl C$ to a maximal abelian
$C^*$-subalgebra $\cl M$ of $B(\cl H).$ Then $\cl M^{\prime} = \cl M$ is injective and hence
$\cl B \cap \cl M = \cl C$ is injective.
\end{proof}

\begin{thm}
Suppose $\mathcal{A}$ is a commutative C*-algebra, $\mathcal{B}$ is an
injective C*-algebra and $\pi:\mathcal{A}\rightarrow\mathcal{B}$ is a 1-1
$\ast$-homomorphism. Then $\pi$ extends to a 1-1 $\ast$-homomorphism
$\rho:I\left(  \mathcal{A}\right)  \rightarrow\mathcal{B}.$
\end{thm}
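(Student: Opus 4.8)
The plan is to reduce to the abelian situation already settled and then to invoke the two preceding results that allow one to replace a general injective C*-algebra by an abelian one. We may assume $\mathcal{A}$ is unital and that $\pi$ is unital: in general one unitizes $\mathcal{A}$, extends $\pi$ by sending the adjoined unit to $1_{\mathcal{B}}$, and, if $p=\pi(1)$ is a proper projection, replaces $\mathcal{B}$ by the corner $p\mathcal{B}p$, which is again injective because $x\mapsto pxp$ is a completely positive projection of $\mathcal{B}$ onto it. So assume $\mathcal{A}=C(X)$ is unital and $\pi$ is a unital, $1$-$1$ $\ast$-homomorphism, and recall from the Corollary identifying $C(K)$ with $I(C(X))$ that $I(\mathcal{A})=C(K)$, where $(K,f)$ is the projective cover of $X$.

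First I would move the problem inside an abelian C*-algebra. Represent $\mathcal{B}\subseteq B(\mathcal{H})$ faithfully and nondegenerately, and let $\mathcal{D}$ be a maximal abelian $\ast$-subalgebra of $\mathcal{B}$ containing the commutative subalgebra $\pi(\mathcal{A})$; such a $\mathcal{D}$ exists by Zorn's lemma. By the Corollary asserting that every maximal abelian C*-subalgebra of an injective C*-algebra is injective, $\mathcal{D}$ is itself injective. Thus $\mathcal{D}$ is a commutative, unital, injective C*-algebra containing the commutative unital C*-algebra $\pi(\mathcal{A})\cong\mathcal{A}$.

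Next I would apply the consequence recorded just after the injective-envelope Corollary: since $\pi(\mathcal{A})$ is a commutative unital C*-algebra contained in the commutative unital injective C*-algebra $\mathcal{D}$, there is a copy $\mathcal{I}$ of the injective hull $I(\pi(\mathcal{A}))$ that contains $\pi(\mathcal{A})$ and sits inside $\mathcal{D}$, hence inside $\mathcal{B}$, as a C*-subalgebra. It then remains to identify $I(\pi(\mathcal{A}))$ with $I(\mathcal{A})$ compatibly with $\pi$. Using injectivity of $\mathcal{I}$, extend $\pi\colon\mathcal{A}\to\pi(\mathcal{A})\subseteq\mathcal{I}$ to a unital completely positive $\alpha\colon I(\mathcal{A})\to\mathcal{I}$, and using injectivity of $I(\mathcal{A})$, extend $\pi^{-1}$ to a unital completely positive $\beta\colon\mathcal{I}\to I(\mathcal{A})$. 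Then $\beta\circ\alpha$ fixes $\mathcal{A}$ and $\alpha\circ\beta$ fixes $\pi(\mathcal{A})$, so by the rigidity of the injective envelope (the analogue of Theorem \ref{existhm}) each composite is the identity; hence $\alpha$ is a unital complete order isomorphism of $I(\mathcal{A})=C(K)$ onto the abelian C*-algebra $\mathcal{I}$. A unital order isomorphism between commutative C*-algebras carries pure states to pure states and is therefore induced by a homeomorphism of spectra, so $\alpha$ is in fact a $\ast$-isomorphism restricting to $\pi$ on $\mathcal{A}$. Composing $\alpha$ with the inclusion $\mathcal{I}\hookrightarrow\mathcal{B}$ produces the desired $1$-$1$ $\ast$-homomorphism $\rho\colon I(\mathcal{A})\to\mathcal{B}$ extending $\pi$.

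The main obstacle is the last identification: one must know that the $\ast$-isomorphism $\pi\colon\mathcal{A}\to\pi(\mathcal{A})$ lifts to a $\ast$-isomorphism of the two injective envelopes that still restricts to $\pi$. This is precisely where rigidity of the projective cover (Theorem \ref{existhm}), dualized to the statement that projective covers of homeomorphic base spaces are homeomorphic over that homeomorphism, does the essential work; the passage from "complete order isomorphism" to "$\ast$-isomorphism" is then automatic because $\mathcal{I}$ is abelian. A secondary point to verify is that the abstractly produced copy $\mathcal{I}$ genuinely contains $\pi(\mathcal{A})$ as a C*-subalgebra, so that $\alpha$ indeed agrees with $\pi$ on $\mathcal{A}$ and the final map $\rho$ is multiplicative and injective.
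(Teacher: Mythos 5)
Your argument is correct and follows essentially the same route as the paper's own proof: both pass to a maximal abelian C*-subalgebra of $\mathcal{B}$ containing (the image of) $\mathcal{A}$, invoke the preceding corollary that such a subalgebra of an injective C*-algebra is itself injective, and then use rigidity of the injective envelope to conclude the extension is one-to-one. The only cosmetic difference is that the paper extends $\pi$ directly to a $\ast$-homomorphism $I(\mathcal{A})\to\mathcal{C}$ via injectivity of $\mathcal{C}$ in the category of abelian C*-algebras and $\ast$-homomorphisms, whereas you first plant a copy of $I(\pi(\mathcal{A}))$ inside the maximal abelian subalgebra and then identify it with $I(\mathcal{A})$ by the standard uniqueness/rigidity argument.
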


\begin{proof}
Without loss of generality we can assume $\mathcal{A}\subset\mathcal{B}$ and
$\pi$ is the inclusion map. Let $\mathcal{C}$ be a maximal abelian selfadjoint
subalgebra of $\mathcal{B}$ that contains $\mathcal{A}.$ It follows from the
preceding corollary that $\mathcal{C}$ is injective in the category of abelian $C^*$-algebras
and *-homomorphisms. 
Thus, $\pi$ extends to a $\ast$-homomorphism $\rho:I(\mathcal{A})
\rightarrow\mathcal{C}.$ Similarly, injectivity of $I(\cl A)$ in the category gives a *-homomorphism
of $\cl C$ to $I(\cl A)$ that fixes $\cl A.$ By rigidity this composition must be the identity on
$I(\cl A)$ and so $\rho$ was one-to-one.
\end{proof}

The above result has interesting consequences for the interplay between measure and topology. Recall that a set is called {\bf meager} or {\bf first category} if it is a countable union of nowhere dense sets. 
Given a compact, Hausdorff space $X$, let $B(X)$ denote the $C^*$-algebra of bounded, Borel functions on $X$ and let $\cl M(X) \subseteq \cl B(X)$ denote the ideal of Borel functions that vanish off a meager set. 
The quotient, $\cl D(X) = \cl B(X)/\cl M(X)$ is called the {\em
  Dixmier algebra,} in honor of J. Dixmier who proved \cite{Di} that
$\cl D(X)$ is the injective envelope of $C(X).$ More precisely, $\cl
D(X)$ is an injective $C^*$-algebra, the inclusion of $C(X) \subseteq
\cl B(X)$ restricts to a *-monomorphism on $C(X)$ when one passes to
the quotient, so that we may regard $C(X) \subseteq \cl D(X)$ and
there is a *-isomorphism between $I(C(X))$ and $\cl D(X)$ that fixes
$C(X)$. Thus, identifying $\cl D(X) \equiv C(K)$ and the incusion map
as the *-homomorphism given by composition $\Pi_f:C(X) \to C(K)$ for some continuous function, $f:K \to X,$ we see that $(K,f)$ is the projective cover of $X$.

Recall also that there exist meager subsets of [0,1] that are nonmeasurable and meager subsets that are Borel sets of arbitrary measure("fat" Cantor sets will do). Moreover, there exist Borel sets of measure 0, i.e., null sets, that are not meager. For these reasons there are no "natural" maps between the algebras $\cl D([0,1])$ and $L^{\infty}([0,1])= \cl B([0,1])/\cl N([0,1]),$ where $\cl N([0,1])$ denotes the ideal of Borel functions that vanish off a null set.
Nevertheless, we have the following:

\begin{thm} There exist *-homomorphisms, $\pi: \cl D([0,1]) \to L^{\infty}([0,1])$ and $\rho: L^{\infty}([0,1]) \to \cl D([0,1])$ such that $\pi(f+ \cl M([0,1]))= f + \cl N([0,1]), \rho(f+ \cl N([0,1])) = f + \cl M([0,1])$ for every  $f \in C([0,1])$ and $\rho \circ \pi(a) =a$ for every $a \in \cl D([0,1]).$
\end{thm}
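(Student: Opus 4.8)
The plan is to pass to the dual, topological picture and build $\pi$ and $\rho$ as composition operators arising from lifts, using projectivity to produce the maps and rigidity of the projective cover to force $\rho\circ\pi$ to be the identity.

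First I would record the two topological models. As explained above, the Dixmier algebra is the injective envelope of $C([0,1])$, so $\cl D([0,1]) = C(K)$ where $(K,f)$ is the projective cover of $[0,1]$; here $K$ is projective and the inclusion $C([0,1]) \hookrightarrow \cl D([0,1])$, $u \mapsto u + \cl M([0,1])$, is identified with composition $u \mapsto u\circ f$. For the second model, $L^\infty([0,1])$ is an abelian von Neumann algebra, so every bounded set of self-adjoint elements has a least upper bound; hence it is an abelian AW*-algebra, and by the equivalences established above (conditions (3) and (4)) its maximal ideal space $Y$ is projective. Writing $L^\infty([0,1]) = C(Y)$, the inclusion $C([0,1]) \hookrightarrow L^\infty([0,1])$, $u \mapsto u + \cl N([0,1])$, is dual to a continuous map $g: Y \to [0,1]$; this inclusion is injective (a continuous function vanishing off a null set is zero, since the complement of a null set is dense), so $g$ has dense, hence full, image and is onto.

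Next I would produce the two maps by lifting. Since $Y$ is projective and $f: K \to [0,1]$ is continuous and onto, the map $g: Y \to [0,1]$ lifts through $f$: there is a continuous $\phi: Y \to K$ with $f\circ\phi = g$. Dually this gives a unital *-homomorphism $\pi: C(K) = \cl D([0,1]) \to C(Y) = L^\infty([0,1])$, $\pi(a) = a\circ\phi$. Symmetrically, projectivity of $K$ applied to $f$ and the onto map $g$ yields a continuous $\psi: K \to Y$ with $g\circ\psi = f$, and dually a unital *-homomorphism $\rho: L^\infty([0,1]) \to \cl D([0,1])$, $\rho(b) = b\circ\psi$. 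The boundary conditions are then immediate from the lifting identities: for $u \in C([0,1])$ one has $\pi(u\circ f) = (u\circ f)\circ\phi = u\circ(f\circ\phi) = u\circ g$, that is $\pi(u + \cl M([0,1])) = u + \cl N([0,1])$, and likewise $\rho(u\circ g) = u\circ(g\circ\psi) = u\circ f$, that is $\rho(u + \cl N([0,1])) = u + \cl M([0,1])$.

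Finally, the crux is $\rho\circ\pi = \mathrm{id}$, which I expect to be the one step that really uses the special structure of the projective cover rather than bare injectivity. For $a \in C(K)$ we have $\rho(\pi(a)) = a\circ(\phi\circ\psi)$, so it suffices to show $\phi\circ\psi = \mathrm{id}_K$. The self-map $\phi\circ\psi: K \to K$ satisfies $f\circ(\phi\circ\psi) = (f\circ\phi)\circ\psi = g\circ\psi = f$, and since $(K,f)$ is a rigid cover by Theorem \ref{existhm}, the only such map is the identity. Hence $\phi\circ\psi = \mathrm{id}_K$ and $\rho\circ\pi(a) = a$ for all $a \in \cl D([0,1])$. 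The main obstacle, and the source of the asymmetry of the statement, is precisely that one cannot expect $\pi\circ\rho = \mathrm{id}$: the composite $\psi\circ\phi: Y \to Y$ satisfies $g\circ(\psi\circ\phi) = g$, but $(Y,g)$ is in general not essential and $Y$ need not be rigid, so no such conclusion is available. This reflects the fact that the projective cover $K$ is the minimal projective model of $[0,1]$ and sits as a retract inside the larger projective space $Y$ attached to $L^\infty([0,1])$.
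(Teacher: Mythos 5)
Your proposal is correct and is essentially the paper's own proof, just carried out explicitly on the dual topological side: the paper obtains $\pi$ and $\rho$ from injectivity of the two algebras in the category of abelian C*-algebras (equivalently, projectivity of their maximal ideal spaces, which is exactly your lifting argument), and forces $\rho\circ\pi=\mathrm{id}$ by rigidity of the injective envelope, which is the dual of your appeal to rigidity of the projective cover $(K,f)$. Your spelled-out details (ontoness of $g$ via density of complements of null sets, the AW*-algebra argument for projectivity of $Y$) are all sound.
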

\begin{proof} The existence of the *-homomorphisms fixing $C([0,1])$ follows from the injectivity of the two algebras in the category of abelian C*-algebras and *-homomorphisms, i.e., from the projectivity of their maximal ideal spaces.
The fact that the composition must be the identity on $\cl D([0,1])$ follows from Dixmier's result that $\cl D([0,1])$ is the injective envelope of $C([0,1])$ and the rigidity of the injective envelope.
\end{proof}


\section{G-Projective Spaces}

In this section we attempt to generalize the results of the previous section to a dynamical
situation. We assume throughout this section that $G$ is a discrete group with identity $e$.
By an {\bf action of G on a topological space X,} we mean a homomorphism of G into the group of homeomorphisms
of X that sends the identity of G to the identity map. 
Given $g \in G$ and $x \in X$, we denote the image of $x$ under the homeomorphism
corresponding to $g$ by $g \cdot x.$ We shall call a compact, Hausdorff space X, equipped with an
action by $G$ a {\bf G-space.} Given two G-spaces, X and Y by a {\bf G-map} we mean a continuous map
$f: X \to Y$ such that $f( g \cdot x) = g \cdot f(x).$ Such a map is
also called {\bf G-equivariant.}

We define {\bf G-cover, G-projective, G-rigid cover,} and {\bf G-essential cover} by analogy
with the earlier definitions, by simply replacing "compact, Hausdorff spaces" by "G-spaces" and "continuous function" by "G-map".

Let $X$ be any G-space and let $(P,r)$ be its
 projective cover. Using the rigidity of $P$, for every $g \in G$ the homeomorphism $x \to g
  \cdot x$ extends uniquely to a homeomorphism of $P$, which we still
  denote by $p \to g \cdot p$ satisfying $r(g \cdot p) = g \cdot r(p)$
  and this collection of homeomorphisms makes $P$ into a G-space and
  $r$ a G-equivariant map. Thus, the projective cover of any G-space
  is again a G-space and the covering map is a G-map.  That is, the projective cover of $X$ is a G-cover. 
Moreover, we have that $(P,r)$ is a G-essential cover of X.
But, unfortunately, $P$ is generally not G-projective.

In fact, if we consider the simplest example, namely, a one point
 space, $X= \{ x \},$ with the trivial G-action. Then $X$ is clearly
 projective as a topological space, but it is not G-projective. This
 can be seen by considering the identity map from X to X and the map
 from some G-space  Y onto X, where Y has no fixed points.
This map has no G-equivariant lifting.

Thus, in general, although the projective cover of a space is a G-essential cover, it need not be a G-projective space.
In fact, this example shows that we will need to amend our 
definition/expectations of G-projective covers.

\begin{prop} Let G be a non-trivial, countable discrete group and let $X= \{ x \}$ be a singleton with the trivial G-action. Then $X$ does not have a G-projective, G-rigid cover.
\end{prop}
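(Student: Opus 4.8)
The plan is to assume, toward a contradiction, that $(C,f)$ is a $G$-projective, $G$-rigid cover of the one-point space $X=\{x\}$, and to unwind what these hypotheses mean here. Since $X$ is a single point, $f$ is the constant map, so a $G$-cover of $X$ is nothing but a nonempty compact Hausdorff $G$-space $C$; the condition $f\circ h=f$ defining $G$-rigidity then holds automatically for every $G$-map $h\colon C\to C$, so \emph{$G$-rigid} simply means that the only $G$-equivariant continuous self-map of $C$ is $\mathrm{id}_C$. $G$-projectivity I will use through two consequences: (a) \emph{lifting}, that every $G$-map out of $C$ factors through any onto $G$-map into its target; and (b) the special case of (a) with a one-point target (trivial action), which shows that $C$ admits a $G$-map into \emph{every} nonempty $G$-space.

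The engine of the proof is the following reduction: \emph{if one can exhibit a $G$-space $Z$ together with two distinct $G$-maps $h,f\colon C\to Z$ with $h$ onto, then we are done.} Indeed, by projectivity (a) the map $f$ lifts through $h$ to a $G$-endomorphism $r\colon C\to C$ with $h\circ r=f$; by $G$-rigidity $r=\mathrm{id}_C$, whence $f=h$, contradicting $f\neq h$. Thus the whole problem becomes the construction of such a triple $(Z,h,f)$.

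First I dispose of the degenerate cases. If some point $c_0$ were fixed by all of $G$, the constant map $c\mapsto c_0$ would be a $G$-endomorphism, hence $=\mathrm{id}_C$ by rigidity, forcing $C=\{c_0\}$ with the trivial action — but that space is \emph{not} $G$-projective, by the example preceding this proposition. So $C$ has no global fixed point and, in particular, the action is nontrivial: there are $g_0\neq e$ and $c_\ast$ with $g_0c_\ast\neq c_\ast$.

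When $G$ is finite the construction of $(Z,h,f)$ is immediate: $G$ with left translation is a compact transitive $G$-space, so projectivity (b) gives a $G$-map $\psi\colon C\to G$, necessarily onto since its image is a nonempty invariant subset of the transitive $G$-set $G$; taking $h=\psi$ and $f=R_{g_0}\circ\psi$ (right translation by any $g_0\neq e$) furnishes two distinct $G$-maps with $h$ onto, and the reduction finishes. The identical argument runs whenever $G$ has a nontrivial finite quotient $Q$, using the transitive $G$-set $Q$. \textbf{The main obstacle is the remaining infinite case.} Because $G$ is countable, every compact transitive $G$-space is finite (a nonempty countable compact Hausdorff space has an isolated point, and transitivity then forces the space to be discrete, hence finite), so transitive models of the above kind exist only when $G$ has a proper finite-index normal subgroup. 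For a countable $G$ with no nontrivial finite quotient one can still produce two distinct $G$-maps out of $C$ — separate $c_\ast$ from $g_0c_\ast$ by Urysohn's lemma via $u\colon C\to[0,1]$, set $\psi(c)(g)=u(g^{-1}c)\in[0,1]^G$ under the left shift, and check $R_{g_0}\circ\psi\neq\psi$ — but neither map is onto a common target: the image of an equivariant map into $[0,1]^G$ is invariant under the left action yet not under the right translations that carry the symmetry, so saturating to recover $R_{g_0}$-invariance destroys surjectivity. Reconciling surjectivity with symmetry is the crux, and is where I expect the countability of $G$ (and perhaps a Frol\'ik-type analysis of the homeomorphisms $g\colon C\to C$) to be genuinely needed.
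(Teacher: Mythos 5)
Your reduction is correct and cleanly stated: for a one-point base, $G$-rigidity says the only $G$-endomorphism of $C$ is the identity, so if $h,f\colon C\to Z$ are $G$-maps with $h$ onto, projectivity plus rigidity forces $f=h$. Your treatment of the fixed-point case and of the case where $G$ has a nontrivial finite quotient $Q$ (using left translation on $Q$ as the target and right translation to produce the second map) is also correct. But, as you yourself flag, the argument does not close: for a countable group with no nontrivial finite quotient (e.g.\ an infinite simple group such as the infinite alternating group, or Thompson's groups $T$ and $V$), you have no target $Z$ admitting two distinct $G$-maps from $C$ with one of them onto. Every compact transitive $G$-space for countable discrete $G$ is indeed finite, so the transitive-target strategy is structurally limited to residually-nontrivial-finite groups, and the $[0,1]^G$ attempt fails for exactly the surjectivity reason you identify. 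This is a genuine gap, not a routine detail: the missing case is where all the content of the proposition lives.

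The paper closes the argument by a different route that sidesteps the need to build such a $Z$. It first uses $G$-rigidity against the translation self-maps $p\mapsto g_1\cdot p$ to conclude that the $G$-action on the cover $P$ is \emph{trivial}; then $G$-projectivity of $P$ gives, for every nonempty $G$-space $Y$, a $G$-map $r\colon P\to Y$, whose image must consist of fixed points. Hence every compact $G$-space would have a fixed point, i.e.\ $G$ would be extremely amenable, contradicting the cited result of Giordano--Pestov that no (nontrivial) countable discrete group is extremely amenable. Note that this external input is exactly what your approach is implicitly trying to reprove from scratch in the hard case: a fixed-point-free compact $G$-space must be produced somewhere, and the paper imports it rather than constructs it. (You might also observe that your degenerate-case argument, showing $C$ has no global fixed point, sits in direct tension with the paper's conclusion that the action on $C$ is trivial; combining your observation with the paper's first step would give an alternative finish, but the first step still requires handling noncentral $g_1$, so some appeal to a result of Giordano--Pestov type seems unavoidable.) As it stands, your proposal proves the proposition only for groups with a nontrivial finite quotient.
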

\begin{proof} Assume that $P$ is a G-projective, G-rigid cover of $X$ and let $f:P \to X.$
Fix $g_1 \in G, g_1 \ne e$ so that $h:P \to P, h(p) = g_1 \cdot p$ is a G-map.
Since, $f \circ h =f,$ by G-rigidity, $h$ must be the identity. Thus, the G-action on P must be trivial.

Now let $Y$ be any G-space and $q:Y \to X.$ Since $P$ is G-projective, there exists a G-map, $r:P \to Y.$ But since the action of G is trivial on $P$, the points in $r(P)$ must be fixed by the G-action on $Y$. 

Thus, $Y$ has a fixed point and so every space that G acts on must have a fixed point. Such a group is called {\bf extremely amenable} and by \cite{GP}, no countable group is extremely amenable. 

Hence, $X$ has no G-projective, G-rigid cover.
\end{proof}

\begin{defn} Let G be a countable, discrete group and let X be a G-space, we call (P,f) a {\bf G-projective cover of X,} provided that P is G-projective, $f:P \to X$ is a G-map and (P,f) is a G-essential cover of X.
\end{defn}

We have been unable to prove that every G-space has a G-projective cover, but we will show that certain "minimal" G-spaces have G-projective covers and derive various properties of G-projectivity that are related to topics in topological dynamics.

It is easy to see (and we prove this below) that
G-projective spaces exist and that every G-space has a G-cover that is a G-projective space.
To this end, let W be any (discrete) set. We define an action of G on 
$G \times W$ by setting $g_1 \cdot (g,w)= (g_1g,w),$ for any $g_1 \in G.$ It is clear that each element of
$G$ defines a permutation of the elements of $G \times W,$ and hence by the universal properties of
the Stone-Cech compactification, this permutation extends to a homeomorphism of $\beta (G \times W)$
and this collection of homeomorphisms defines an action of G on $\beta (G \times W),$ making it into
a G-space.

\begin{prop} \label{2.3} Let W be any set. Then $\beta (G \times W)$ is a projective G-space.
\end{prop}
\begin{proof} Let $X,Y$ be G-spaces, let $f: Y \to X$ be an onto G-map and let $h: \beta(G \times W) \to X$
be a G-map. Let $e \in G$ denote the identity element and choose elements $y_w \in Y$ such that
$h((e,w))= f(y_w).$ Define a map $r: G \times W \to Y$ by setting $r((g,w)) = g \cdot y_w.$ 
By the universal properties, this extends to a unique continuous function from $\beta (G \times W)$ to
$Y$ that is easily seen to be a G-map.
\end{proof}

\begin{prop} \label{2.4} Every G-space has a G-cover that is a G-projective space.
\end{prop}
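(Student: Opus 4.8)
The plan is to invoke Proposition \ref{2.3} directly: since $\beta(G \times W)$ is a projective $G$-space for every set $W$, it suffices to exhibit, for a given $G$-space $X$, a set $W$ together with a continuous $G$-equivariant surjection from $\beta(G \times W)$ onto $X$. The natural choice is to take $W = X$, regarded as a \emph{discrete} set, and to build the covering map from the action itself.

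First I would define $\phi : G \times X \to X$ by $\phi((g,x)) = g \cdot x$, where $G \times X$ carries the discrete topology. This map is continuous (its domain is discrete) and it is already onto, since $\phi((e,x)) = x$ for every $x \in X$. A one-line check shows it is $G$-equivariant for the action $g_1 \cdot (g,x) = (g_1 g, x)$ used in Proposition \ref{2.3}: indeed $\phi(g_1 \cdot (g,x)) = (g_1 g)\cdot x = g_1 \cdot (g \cdot x) = g_1 \cdot \phi((g,x))$.

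Next, since $X$ is compact Hausdorff, the universal property of the Stone-Cech compactification extends $\phi$ to a continuous map $\bar\phi : \beta(G \times X) \to X$. That $\bar\phi$ is onto follows because its image is a compact, hence closed, subset of $X$ containing the dense image $\phi(G \times X) = X$. To see that $\bar\phi$ is a $G$-map, I would argue by density: for each fixed $g_1 \in G$ the two continuous maps $p \mapsto \bar\phi(g_1 \cdot p)$ and $p \mapsto g_1 \cdot \bar\phi(p)$ agree on the dense subset $G \times X$ by equivariance of $\phi$, and the second map is continuous precisely because the $G$-action on $X$ is by homeomorphisms; hence they agree on all of $\beta(G \times X)$.

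Putting these together, $(\beta(G \times X), \bar\phi)$ is a $G$-cover of $X$ whose total space is $G$-projective by Proposition \ref{2.3}, which is exactly what is required. There is no genuine obstacle here; the only point needing care is the equivariance of the extended map, and that is handled entirely by the standard density argument once one observes that the target action is by homeomorphisms. This mirrors the non-equivariant construction in Proposition \ref{2.12}, with $\beta(G \times X)$ playing the role that $\beta E$ played there.
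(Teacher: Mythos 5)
Your proposal is correct and is essentially the paper's own proof: the paper also takes $X$ with the discrete topology, extends $(g,x)\mapsto g\cdot x$ to a $G$-map $\beta(G\times X_d)\to X$, and invokes Proposition \ref{2.3}. You have simply filled in the routine verifications (surjectivity and equivariance of the extension via density) that the paper leaves implicit.
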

\begin{proof} Let $X$ be a G-space, let $X_d$ denote $X$ with the discrete topology. The map $(g,x) \to g \cdot x$
extends to a G-map from $\beta (G \times X_d)$ to $X$.
\end{proof}

We now consider some analogues of our earlier results.

\begin{prop} \label{2.5}Let $X$ be a G-space. Then $X$ is G-projective if and only if $C(X)$ is G-injective, i.e., is injective in the
category whose objects are abelian $C^*$-algebras, equipped with G-actions and whose morphisms are
G-equivariant *-homomorphisms.
Every unital abelian C*-algebra equipped with a G-action can be embedded *-monomorphically and G-equivariantly into a unital, abelian C*-algebra that is G-injective.
\end{prop}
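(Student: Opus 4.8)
The statement has two parts, and the plan is to prove them using the duality between G-spaces and abelian C*-algebras with G-action, exactly as in Theorem 2.4 but now carrying the group action through every arrow.

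For the equivalence in the first part, the strategy is to dualize the definition of G-projectivity. First I would observe that a G-action on a compact Hausdorff space $X$ induces, by composition, a G-action on $C(X)$ by *-automorphisms, and that a continuous G-map $f: Y \to X$ induces a G-equivariant *-homomorphism $\pi_f: C(X) \to C(Y)$; moreover $f$ is onto precisely when $\pi_f$ is one-to-one. This is the same dictionary used in the proof of $(2) \Leftrightarrow (3)$ in Theorem 2.4, with ``G-equivariant'' appended throughout. Granting this, suppose $X$ is G-projective. To show $C(X)$ is G-injective, let $\mathcal{A} \subseteq \mathcal{B}$ be abelian C*-algebras with G-action (the inclusion G-equivariant) and let $\phi: \mathcal{A} \to C(X)$ be a G-equivariant *-homomorphism. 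Writing $\mathcal{A} = C(Z)$ and $\mathcal{B} = C(Y)$ for G-spaces $Z, Y$, the inclusion corresponds to an onto G-map $Y \to Z$ and $\phi$ corresponds to a G-map $X \to Z$; G-projectivity of $X$ lifts this to a G-map $X \to Y$, whose induced *-homomorphism $C(Y) \to C(X)$ extends $\phi$. Conversely, if $C(X)$ is G-injective, I would reverse this: given an onto G-map $Y \to Z$ and a G-map $X \to Z$, dualize to an embedding $C(Z) \hookrightarrow C(Y)$ and a G-equivariant *-homomorphism $C(Z) \to C(X)$, extend the latter by G-injectivity, and dualize the extension back to the desired G-equivariant lifting $X \to Y$.

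For the second part, I would use Proposition \ref{2.4}: every G-space $X$ has a G-cover $(P, f)$ with $P$ a projective G-space, obtained as $P = \beta(G \times X_d)$. By Proposition \ref{2.3}, $\beta(G \times W)$ is G-projective, so $P$ is G-projective, and hence by the equivalence just established $C(P)$ is G-injective. The onto G-map $f: P \to X$ dualizes to a G-equivariant *-monomorphism $\pi_f: C(X) \to C(P)$ into the G-injective algebra $C(P)$. Since every unital abelian C*-algebra with G-action is of the form $C(X)$ for a G-space $X$, this gives the required embedding.

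The main obstacle, and the step requiring genuine care rather than routine translation, is verifying that the standard Gelfand/Poincaré-duality dictionary is fully \emph{G-equivariant}: specifically, that a G-action on $C(X)$ by *-automorphisms arises from a genuine topological G-action on the spectrum $X$ (so that the second part's appeal to ``every abelian C*-algebra with G-action is a $C(X)$'' is legitimate), and that the extension produced by G-injectivity, when dualized, respects the equivariance condition $r(g \cdot p) = g \cdot r(p)$ on the nose. Each automorphism of $C(X)$ induces a homeomorphism of the maximal ideal space by functoriality of the Gelfand transform, and the homomorphism property of the G-action transfers to this induced action; but one must check that equivariance of a *-homomorphism corresponds exactly to equivariance of the dual map, which is where the contravariance of the functor must be tracked so that $g \cdot$ on the algebra side produces $g \cdot$ (not $g^{-1} \cdot$) on the space side under the chosen conventions.
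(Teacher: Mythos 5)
Your proposal is correct and is precisely the argument the paper intends: the paper in fact states this proposition without proof, treating it as the routine G-equivariant dualization of the equivalence $(2)\Leftrightarrow(3)$ in Theorem 2.4 together with Propositions \ref{2.3} and \ref{2.4}, which is exactly what you carry out. Your closing remark about verifying that the Gelfand duality dictionary is fully G-equivariant (and that equivariance of a *-homomorphism corresponds to equivariance of the dual map) is the right point to flag, and it checks out.
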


\begin{prop} \label{2.6} Let $X$ be a G-projective space. Then $C(X)$ is
  G-injective in each of the following categories:
\begin{enumerate}
\item the category of G-operator spaces and completely contractive
  G-equivariant maps,
\item the category of G-operator systems and completely positive
  G-equivariant maps,
\item the category of G-Banach spaces and contractive G-equivariant linear
  maps.
\end{enumerate}
\end{prop}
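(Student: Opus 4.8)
The plan is to prove part (3), the $G$-Banach space statement, directly by running the equivariant analogue of the implication $(3)\Rightarrow(5)$ from Theorem 2.3, and then to deduce (1) and (2) from (3) by the standard soft arguments about maps into commutative $C^*$-algebras. For (3), let $B\subseteq C$ be $G$-Banach spaces (so $G$ acts by isometries and $B$ is $G$-invariant) and let $T:B\to C(X)$ be a contractive $G$-equivariant linear map; I want a contractive $G$-equivariant extension to $C$. I would equip the duals $B^*$ and $C^*$ with the adjoint action $(g\cdot\beta)(v)=\beta(g^{-1}\cdot v)$; this acts by weak*-homeomorphisms and preserves the unit balls, so that $B^*_1$ and $C^*_1$, with their weak*-topologies, become compact Hausdorff $G$-spaces.

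Next I would build the two $G$-maps needed to invoke $G$-projectivity. Composing $T$ with evaluation gives $h:X\to B^*_1$, $h(x)(b)=T(b)(x)$; a short computation using the equivariance of $T$ together with the action $(g\cdot\phi)(x)=\phi(g^{-1}\cdot x)$ on $C(X)$ shows $h(g\cdot x)=g\cdot h(x)$, so $h$ is a $G$-map. The inclusion $B\subseteq C$ induces the restriction map $f:C^*_1\to B^*_1$, $f(\gamma)=\gamma|_B$, which is weak*-continuous, $G$-equivariant (again because the action on $B$ is the restriction of that on $C$), and onto by Hahn-Banach. Since $X$ is $G$-projective, there is then a $G$-map $s:X\to C^*_1$ with $f\circ s=h$. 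Setting $\tilde T(c)(x)=s(x)(c)$ gives a map $C\to C(X)$ that is contractive (because $s(x)\in C^*_1$), extends $T$ (because $s(x)|_B=h(x)$), and is $G$-equivariant (because $s$ is, together with the two dual-action conventions). This proves (3).

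Finally I would deduce (2) and (1). For (2), given a unital completely positive $G$-equivariant $\phi:S_1\to C(X)$ with $S_1\subseteq S_2$ $G$-operator systems, $\phi$ is contractive, so (3) yields a contractive $G$-equivariant extension $\psi:S_2\to C(X)$; since $\psi$ fixes the unit it is unital, hence positive, and positivity into an abelian $C^*$-algebra forces complete positivity, exactly as in the proof of $(5)\Rightarrow(1)$ in Theorem 2.3. For (1), I would use that a contractive linear map into $C(X)$ is automatically completely contractive (matrix norms in $C(X)\subseteq\ell^\infty(X)$ are computed pointwise, and for the resulting functionals $\phi_x$ one has $\|\phi_x\|_{cb}=\|\phi_x\|$), so any completely contractive $G$-equivariant $\phi:V_1\to C(X)$ is in particular contractive, extends contractively and $G$-equivariantly by (3), and the extension is again automatically completely contractive. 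The one place demanding care — and the only real content beyond Theorem 2.3 — is the equivariance bookkeeping in the middle paragraph: one must fix the adjoint-action conventions so that $h$ and the restriction map $f$ are genuinely $G$-equivariant, since it is precisely this that lets $G$-projectivity produce a $G$-equivariant lift $s$ rather than a merely continuous one.
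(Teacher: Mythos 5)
Your proof is correct and takes essentially the same approach as the paper, whose entire proof of this proposition is the remark that all three statements follow by running the implication $(3)\Rightarrow(5)$ of the earlier equivalence theorem equivariantly; you have simply supplied the adjoint-action and equivariance bookkeeping that this remark leaves implicit. Your deduction of (1) and (2) from (3) via automatic complete contractivity and complete positivity of maps into an abelian $C^*$-algebra is likewise exactly the paper's own $(5)\Rightarrow(1)$ argument, so nothing is genuinely different.
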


\begin{proof}  The proofs of all three of these statements is similar
  to the proof that (3) implies (5) in Theorem 2.4.
\end{proof}

\begin{cor} Let W be any set and let G be any discrete group, then
  $\ell^{\infty}(G \times W) =
  C(\beta (G \times W))$ is G-injective in each of the three categories.
\end{cor}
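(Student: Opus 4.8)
The plan is to read off this corollary as a direct consequence of Proposition \ref{2.3} and Proposition \ref{2.6}, since once the algebra is correctly identified as the continuous functions on a G-projective space, there is nothing left to do.

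First I would make the identification $\ell^{\infty}(G \times W) = C(\beta(G \times W))$ precise as an isomorphism of G-spaces of functions. Because $G \times W$ is a product of a discrete group with an arbitrary set, it carries the discrete topology, so $\beta(G \times W)$ is genuinely its Stone--\v Cech compactification. The universal property then yields the canonical *-isomorphism $C(\beta(G \times W)) \cong \ell^{\infty}(G \times W)$: each bounded function on the discrete set extends uniquely to a continuous function on the compactification, and this assignment is a unital *-isomorphism of abelian C*-algebras. I would then check that this isomorphism is G-equivariant for the G-action induced on $\ell^{\infty}(G \times W)$ by the left-translation action $g_1 \cdot (g,w) = (g_1 g, w)$ introduced before Proposition \ref{2.3}. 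This is immediate, since the homeomorphisms of $\beta(G \times W)$ that define its G-space structure are by construction the continuous extensions of precisely these permutations of $G \times W$, and composition with a homeomorphism corresponds exactly to the translation action on $\ell^{\infty}(G \times W)$.

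Next I would invoke Proposition \ref{2.3}, which asserts that $\beta(G \times W)$ is a projective G-space, i.e., is G-projective in the sense defined at the start of the section. This is the only substantive input, and it is already established. Finally I would apply Proposition \ref{2.6} to the G-projective space $P = \beta(G \times W)$: that proposition states that whenever $X$ is G-projective, $C(X)$ is G-injective in each of the three listed categories. Transporting this conclusion across the G-equivariant *-isomorphism $C(\beta(G \times W)) \cong \ell^{\infty}(G \times W)$ gives exactly the assertion of the corollary.

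I do not anticipate any real obstacle, as the result is a formal specialization of the two preceding propositions. The only point deserving (minor) care is confirming that the identification $C(\beta(G \times W)) \cong \ell^{\infty}(G \times W)$ intertwines the two G-actions, so that G-injectivity genuinely transfers along it; since G-injectivity is a categorical property invariant under isomorphism in each of the three categories, this verification closes the argument.
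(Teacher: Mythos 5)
Your argument is exactly the one the paper intends: the corollary is stated without proof precisely because it follows by combining Proposition \ref{2.3} (that $\beta(G\times W)$ is a G-projective space) with Proposition \ref{2.6} (that $C(X)$ is G-injective in the three categories whenever $X$ is G-projective), modulo the routine identification $C(\beta(G\times W))\cong\ell^{\infty}(G\times W)$. Your extra care in checking that this identification intertwines the G-actions is a correct and worthwhile detail, but the route is the same.
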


\begin{prop} Let G be a countable, discrete group, then $\beta (G)$ has
no fixed points.
\end{prop}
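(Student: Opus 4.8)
The plan is to realize $\beta G$ as the space of ultrafilters on $G$ (equivalently, to work through its Boolean algebra of clopen sets) and to prove the stronger statement that the $G$-action is \emph{free}: for every $g \in G$ with $g \neq e$, the homeomorphism $\tau_g : \beta G \to \beta G$ extending left translation $x \mapsto gx$ has no fixed point. Since $G$ is nontrivial (otherwise $\beta G$ is a single, necessarily fixed, point), freeness immediately rules out a point fixed by all of $G$, which is what ``no fixed points'' asks for.

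First I would set up the standard dictionary between subsets and clopen sets. For $A \subseteq G$ let $\bar A$ denote its closure in $\beta G$; then $A \mapsto \bar A$ is a Boolean isomorphism onto the clopen subsets of $\beta G$, so that $\overline{A \cap B} = \bar A \cap \bar B$, a finite partition of $G$ induces a partition of $\beta G$ by the corresponding clopen sets, and $\bar A = \emptyset$ if and only if $A = \emptyset$. Since $\tau_g$ is a homeomorphism restricting to $L_g$ on the dense subset $G$, it carries closures to closures, giving the single identity I will need: $\tau_g(\bar A) = \overline{gA}$.

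The heart of the argument is a combinatorial coloring. Fix $g \neq e$. Left translation $L_g$ is a fixed-point-free permutation of $G$ (as $gx = x$ forces $g = e$), so the graph on vertex set $G$ that joins each $x$ to $gx$ has every vertex of degree at most two; hence it is a disjoint union of finite cycles and infinite paths, and every such graph is $3$-colorable. Equivalently, there is a function $c : G \to \{0,1,2\}$ with $c(x) \neq c(gx)$ for all $x$ (concretely, alternate two colors along each $\langle g \rangle$-orbit, spending the third color only to close up an odd finite orbit). Setting $A_i = c^{-1}(i)$ yields a three-set partition $G = A_0 \sqcup A_1 \sqcup A_2$ with $A_i \cap gA_i = \emptyset$ for each $i$.

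The conclusion is then a one-line topological argument. If $p = \tau_g(p)$, then $p$ lies in exactly one $\bar{A_i}$, whence $p = \tau_g(p) \in \tau_g(\bar{A_i}) = \overline{gA_i}$, so $p \in \bar{A_i} \cap \overline{gA_i} = \overline{A_i \cap gA_i} = \overline{\emptyset} = \emptyset$, a contradiction. Thus $\tau_g$ is fixed-point-free for every $g \neq e$, and $\beta G$ has no $G$-fixed point. The only real obstacle is producing the finite coloring; once the orbit structure of $L_g$ is understood this is routine, and it is precisely the step that uses $g \neq e$, i.e. freeness of the action on $G$ itself. I note that countability of $G$ is not essential to this scheme—it serves only to let one pick orbit representatives without invoking the axiom of choice.
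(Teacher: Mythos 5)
Your proof is correct, and it takes a genuinely different route from the paper's. The paper argues softly: a fixed point $\omega_0\in\beta G$ would transport, via the canonical $G$-map $\beta G\to Y$ sending $g\mapsto g\cdot y_0$, to a fixed point in \emph{every} compact $G$-space, making $G$ extremely amenable; the paper then cites Giordano--Pestov for the fact that no countable discrete group is extremely amenable. You instead prove the stronger statement that the action on $\beta G$ is \emph{free}, by the classical three-set coloring argument: for $g\neq e$ the translation graph on $G$ decomposes into cycles and bi-infinite paths, hence admits a proper $3$-coloring, giving a partition $G=A_0\sqcup A_1\sqcup A_2$ with $A_i\cap gA_i=\emptyset$, and disjointness of the clopen sets $\overline{A_i}$ and $\overline{gA_i}$ kills any fixed point of $\tau_g$. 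In effect you have unpacked the black box the paper invokes: this coloring argument is essentially the standard proof of Veech's theorem for discrete groups, which is what underlies the non-extreme-amenability result being cited. What your approach buys is a self-contained, purely combinatorial/topological proof (very much in the spirit of the paper's stated goal of eliminating external machinery), a stronger conclusion (freeness, not merely absence of a global fixed point), and the removal of the countability hypothesis; you are also right to flag the degenerate case of the trivial group, for which the statement as literally phrased fails. The paper's version is shorter given the citation, and reuses the same transport-of-fixed-points device that already appears in its proof that a singleton has no $G$-projective, $G$-rigid cover.
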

\begin{proof} Assume that $\beta G$ has a fixed point, $\omega_0$. Let $Y$ be any G-space and let $y_0 \in Y.$
We can define a G-equivariant continuous map, $f: \beta G \to Y$ by setting $f(g) = g \cdot y_0$ and extending.
Now $f(\omega_0)$ is a fixed point of $Y$, since $g \cdot f(\omega_0) = f(g \cdot \omega_0) = f( \omega_0).$ Thus, every space that G acts on has a fixed point and we are again done by \cite{GP}.
\end{proof}

\begin{defn} A G-space is {\bf extremally G-disconnected} if the closure of every G-invariant open set
is open.
\end{defn}

\begin{prop} If $X$ is G-projective, then $X$ is extremally G-disconnected.
\end{prop}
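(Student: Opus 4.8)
The plan is to adapt Gleason's purely topological argument that a projective space is extremally disconnected (Theorem 2.3) to the equivariant setting, using only the definition of G-projectivity together with the hypothesis that the open set in question is G-invariant. Let $U \subseteq X$ be a G-invariant open set; the goal is to show that $\bar{U}$ is open. The idea is to manufacture a two-sheeted G-cover of $X$ that \emph{detects} the boundary of $U$, and then to use a G-equivariant section to split it off.

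Concretely, I would set $C_1 = \bar{U}$ and $C_2 = X \setminus U$. Since $U$ is open and G-invariant, both $C_1$ and $C_2$ are closed and G-invariant, and $C_1 \cup C_2 = X$. I would then form the disjoint union $Y = C_1 \sqcup C_2$, which is again a compact, Hausdorff space and which carries a G-action precisely because each sheet is G-invariant. Defining $h \colon Y \to X$ to be the inclusion on each sheet gives a continuous, onto G-map. Because $X$ is G-projective, I can lift the identity map $\mathrm{id}\colon X \to X$ (a G-map) through $h$, obtaining a G-map $s \colon X \to Y$ with $h \circ s = \mathrm{id}_X$.

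The section $s$ partitions $X$ into the two clopen, G-invariant sets $W = s^{-1}(C_1)$ and $s^{-1}(C_2)$, and the key step is to pin down $W$. On one hand, $h \circ s = \mathrm{id}$ forces $W \subseteq C_1 = \bar{U}$. On the other hand, every point of $U$ must land in the first sheet: if $x \in U$ then $x \notin C_2 = X \setminus U$, so $s(x)$ cannot lie in the $C_2$-copy, whence $x \in W$. Thus $U \subseteq W \subseteq \bar{U}$, and since $W$ is closed this gives $W = \bar{U}$. As $W$ is also open, $\bar{U}$ is open, which is exactly extremal G-disconnectedness.

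The role of the G-invariance of $U$ is precisely to guarantee that $C_1$, $C_2$, $Y$, and $h$ all live in the equivariant category, so that G-projectivity can be applied; without it one only recovers the non-equivariant statement of Theorem 2.3. I expect the main point requiring care to be the verification that $Y$ and $h$ genuinely define a G-cover and that the lift supplied by G-projectivity is G-equivariant, together with the clean extraction $U \subseteq W \subseteq \bar{U}$. A completely parallel argument is available through Proposition \ref{2.6}: the G-equivariant completely positive projection $\varphi \colon \ell^{\infty}(X) \to C(X)$ sends the G-fixed element $\chi_{\bar{U}}$ to a G-invariant continuous function $h = \varphi(\chi_{\bar{U}})$, and the order-interpolation computation from the proof of $(4)\Rightarrow(3)$ in Theorem 2.4 identifies $h$ with $\chi_{\bar{U}}$, again forcing $\bar{U}$ to be open.
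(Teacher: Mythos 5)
Your argument is correct and is essentially the paper's own proof: both form the G-cover $\bar{U}\sqcup(X\setminus U)\to X$, use G-projectivity to lift the identity, and identify the preimage of the (open) $\bar{U}$-sheet with $\bar{U}$ itself, forcing $\bar{U}$ to be open. Your write-up just supplies more detail on the sandwich $U\subseteq W\subseteq\bar{U}$ than the paper does.
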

\begin{proof} Given any open, G-invariant subset $U$ of $X$, let $Y$ be the disjoint union of
the G-spaces $X\backslash U$ and $\Bar(U).$ Define a G-map from $Y$ onto $X$ by the two
inclusions. Since $X$ is G-projective there is a G-map from $h:X \to Y.$ Clearly, for $u \in U$,
we have $h(u) = u.$  Hence, $h^{-1}(\Bar(U) ) = \Bar(U),$ but since $\Bar(U)$ is open
in $Y$ it must be open in $X$.
\end{proof}

\begin{remark} The space consisting of a single point together with the trivial action is extremally
G-disconnected, but is not G-projective.
\end{remark}

\begin{prob} Is there some property of the G-action such that together with extremally G-disconnected
yields G-projective? 
\end{prob}

\begin{prop} Let $X$ be a G-space, let $C$ be a G-projective space such that $(C,f)$ is a G-rigid cover of $X$, then $(C,f)$ is a
G-essential cover.
\end{prop}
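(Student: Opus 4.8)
The plan is to transcribe the proof of Proposition \ref{2.13} into the equivariant category, checking at each step that every map produced is G-equivariant so that G-rigidity may legitimately be invoked. To begin, I would assume $(C,f)$ is a G-rigid cover with $C$ a G-projective space, and take an arbitrary G-space $Y$ together with a G-map $h: Y \to C$ satisfying $f(h(Y)) = X$; the goal is to show $h(Y) = C$, which is exactly the defining condition for $(C,f)$ to be G-essential.

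Next I would form the composite map $f \circ h : Y \to X$. Being a composition of G-maps it is itself G-equivariant, and it is onto $X$ by the hypothesis $f(h(Y))=X$. Now I invoke the G-projectivity of $C$: applying the definition to the onto G-map $f \circ h : Y \to X$ and the G-map $f : C \to X$ produces a \emph{G-map} $s: C \to Y$ with $(f \circ h) \circ s = f$. The crucial point is that G-projectivity delivers a lifting that is equivariant, not merely continuous.

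With $s$ in hand I would consider $h \circ s : C \to C$. This is again a composition of G-maps, hence a G-map, and it satisfies $f(h \circ s(c)) = (f \circ h \circ s)(c) = f(c)$ for every $c \in C$. Since $h \circ s$ is a G-map from $C$ to $C$ compatible with $f$, the assumed G-rigidity of $(C,f)$ forces $h \circ s = \mathrm{id}_C$. Consequently, for each $c \in C$ we have $c = h(s(c)) \in h(Y)$, so $h(Y) = C$, and $(C,f)$ is G-essential.

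There is no genuine obstacle here, in contrast to the existence of G-projective covers; the argument is the one-directional analogue of Proposition \ref{2.13} (recall the earlier remark that G-projective and G-essential need \emph{not} imply G-rigid, so only this implication survives the passage to the equivariant setting). The one thing that must be verified rather than assumed is that equivariance propagates through the two compositions and, most importantly, through the lifting step—and this holds precisely because the definition of G-projective was set up to return G-equivariant liftings and because composites of G-maps are G-maps, so that the G-rigidity hypothesis, which applies only to G-maps $C \to C$, is invoked on an object to which it genuinely applies.
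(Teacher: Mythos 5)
Your argument is correct and is exactly the paper's proof: the paper simply states that the proof "proceeds exactly as in Proposition \ref{2.13}," and your write-up is that same lifting-plus-rigidity argument, with the (appropriate) added care that the lifting $s$ and the composite $h\circ s$ are G-maps so that G-rigidity applies.
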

\begin{proof} The proof proceeds exactly as in Proposition \ref{2.13}.
\end{proof}

\begin{prop} Let $(Y,f)$ be a G-cover of a G-space $X$, and let $C \subset Y$ be a minimal, compact,
G-invariant subset of $Y$ that maps onto $X.$ Then $(C,f)$ is a G-essential cover of $X$.
\end{prop}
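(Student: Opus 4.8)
The plan is to imitate the proof of Proposition \ref{2.15}, the non-equivariant version, modifying it only as much as is needed to stay inside the equivariant category. Unwinding the definition of G-essential cover, the statement to verify is: given any G-space $Z$ and any G-map $h \colon Z \to C$ with $f(h(Z)) = X$, one must show that $h(Z) = C$. So I would begin by fixing such a $Z$ and $h$ and reducing the problem to showing that the image $h(Z)$ is forced to be all of $C$ by the minimality hypothesis on $C$.

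The first step is to observe that $h(Z)$ is compact, being the continuous image of the compact space $Z$. The only genuinely new ingredient compared with Proposition \ref{2.15} is that the minimality hypothesis here concerns minimal compact \emph{G-invariant} subsets, so I must also check that $h(Z)$ is G-invariant before I can invoke minimality. This is immediate from equivariance: for any $g \in G$ and $z \in Z$ we have $g \cdot h(z) = h(g \cdot z) \in h(Z)$, so $g \cdot h(Z) \subseteq h(Z)$, and applying the same inclusion to $g^{-1}$ yields $g \cdot h(Z) = h(Z)$. Hence $h(Z)$ is a compact, G-invariant subset of $C \subseteq Y$.

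The final step is to apply minimality. Since $f(h(Z)) = X$ by hypothesis, the set $h(Z)$ maps onto $X$, so it belongs to the collection of compact, G-invariant subsets of $Y$ that map onto $X$ --- the very collection in which $C$ was chosen to be minimal. Because $h(Z) \subseteq C$, minimality forces $h(Z) = C$, which is precisely the G-essentiality condition. I do not expect a real obstacle in this argument: the entire content beyond the non-equivariant case is the one-line verification that $h(Z)$ is G-invariant, which follows directly from the definition of a G-map, and the rest of the reasoning is identical to that of Proposition \ref{2.15}.
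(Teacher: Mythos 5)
Your proof is correct and follows the same route as the paper, which simply says the argument proceeds exactly as in Proposition \ref{2.15}: the image $h(Z)$ is a compact, G-invariant subset of $C$ mapping onto $X$, so minimality forces $h(Z)=C$. Your explicit verification that $h(Z)$ is G-invariant is the only detail the paper leaves implicit, and it is handled correctly.
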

\begin{proof} The proof proceeds exactly as in Proposition \ref{2.15}.
\end{proof}

\begin{prob} If Y is G-projective, then is C G-projective? 
\end{prob}

If true, then this would prove that every G-space has a G-projective, G-essential cover. In our proof of the existence of projective covers(Theorem \ref{existhm}), we used  that the cover was rigid in a key way to get that C was projective.
Gleason's proof \cite{Gl} uses Stone's Boolean algebra representation theorem. Thus, for that approach, we would need to first
generalize Stone's theorem to a G-equivariant Boolean algebras. 



We now proceed to the one key case for which we can prove the existence of a G-projective.

\begin{defn} We say that a G-space X is {\bf minimal} if the G-orbit of every point is dense in X.
\end{defn}

It is easy to see that $X$ is a minimal G-space if and only if $C(X)$ is {\bf G-simple}, i.e., has no non-trivial two-sided G-invariant ideals.

To obtain the G-projective cover in this case, we will need to use some results on algebra in $\beta G,$ taken from \cite{HS}. As we saw earlier, left multiplication on G, extends to define a G-action on $\beta G.$ By the universal properties of the Stone-Cech compactification, for each $\omega \in \beta G,$ the map, $g \to g \cdot \omega,$ extends to  continuous map, $\rho_{\omega}: \beta G \to \beta G.$ Setting $\omega_2 \cdot \omega_1 = \rho_{\omega_1}(\omega_2),$ defines an associative product on $\beta G$ that extends the product on G and makes $\beta G$ a right topological semigroup, i.e., one for which multiplication on the left is continuous for each fixed element on the right, with the same identity as G \cite[Chapter 4]{HS}. Moreover, the corona $G^*= \beta G \setminus G$ is a two-sided ideal in this semigroup \cite[Corollary 4.33]{HS}. Thus, $\beta G$ has minimal left ideals that are not all of $\beta G.$ By \cite[Corollary 2.6]{HS} and \cite[Theorem 2.9]{HS} every minimal left ideal is closed and of the form $(\beta G) \cdot \omega$ with $\omega$ and idempotent element. Note that necessarily, $\omega \in G^*.$
Finally, by \cite[Theorem 2.11c]{HS} any two minimal left ideals, $L$ and $L^{\prime}$ are homeomorphic, and if $\omega \in L$ is any element, then $\rho_{\omega}: L^{\prime} \to L$ is a homeomorphism. Note that $\rho_{\omega}$ is also a G-map.  

\begin{thm} Let $G$ be a countable discrete group, and let $L$ be a minimal left ideal in $\beta G,$ then $L$ is G-projective.
\end{thm}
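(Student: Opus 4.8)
The plan is to realize $L$ as a G-retract of $\beta G$ and then deduce G-projectivity of $L$ from that of $\beta G$, paralleling the way retracts of projective spaces were shown to be projective in the proof of Theorem~\ref{existhm}. By the structure theory cited above we may write $L = (\beta G)\cdot\omega$ for some idempotent $\omega \in G^*$, and the candidate retraction is the right-multiplication map $\rho_\omega : \beta G \to \beta G$, $\rho_\omega(\sigma) = \sigma\cdot\omega$.

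First I would record the three properties of $\rho_\omega$ that make it a G-equivariant retraction onto $L$. (i) $\rho_\omega$ is continuous, since $\beta G$ is a right topological semigroup, so multiplication by a fixed element on the right is continuous. (ii) $\rho_\omega$ is a G-map: for $g \in G$ and $\sigma \in \beta G$, associativity together with the fact that the action is left multiplication gives $\rho_\omega(g\cdot\sigma) = (g\cdot\sigma)\cdot\omega = g\cdot(\sigma\cdot\omega) = g\cdot\rho_\omega(\sigma)$. (iii) The image of $\rho_\omega$ is exactly $(\beta G)\cdot\omega = L$, and on $L$ the map is the identity: if $\ell = \sigma\cdot\omega \in L$, then $\rho_\omega(\ell) = \sigma\cdot\omega\cdot\omega = \sigma\cdot\omega = \ell$ because $\omega$ is idempotent. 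Since $L$ is a left ideal it is in particular G-invariant, so the inclusion $\iota: L \hookrightarrow \beta G$ is a G-map; together with $\rho_\omega\circ\iota = \mathrm{id}_L$ this exhibits $L$ as a G-retract of $\beta G$.

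Next I would run the standard lifting argument showing that a G-retract of a G-projective space is G-projective. Given G-spaces $X, Y$, an onto G-map $h: Y \to X$, and a G-map $f: L \to X$, form $f\circ\rho_\omega : \beta G \to X$. By Proposition~\ref{2.3} (taking $W$ to be a single point, so that $\beta(G\times W) = \beta G$), $\beta G$ is G-projective, so there is a G-map $R: \beta G \to Y$ with $h\circ R = f\circ\rho_\omega$. Setting $s = R\circ\iota : L \to Y$ yields a G-map with $h\circ s = h\circ R\circ\iota = f\circ\rho_\omega\circ\iota = f$, i.e.\ a G-equivariant lifting of $f$. Hence $L$ is G-projective.

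I do not expect a serious obstacle here: once $\rho_\omega$ is identified as the retraction, everything reduces to assembling facts already in hand. The one point that genuinely drives the argument is the idempotency of $\omega$, since it is what forces $\rho_\omega$ to fix $L$ pointwise; so the main things to get right are citing the structure theorem for minimal left ideals correctly and checking that $\rho_\omega$ is simultaneously continuous and G-equivariant, both of which are immediate from the right topological semigroup structure and associativity.
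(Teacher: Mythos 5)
Your proof is correct and follows essentially the same route as the paper: the paper also takes an idempotent $\omega_1$ with $L=(\beta G)\cdot\omega_1$, observes that $\rho_{\omega_1}$ is a G-equivariant retraction of $\beta G$ onto $L$ (continuity from the right topological semigroup structure, equivariance from associativity, fixing $L$ pointwise from idempotency), and then deduces G-projectivity of $L$ from that of $\beta G$ by "a little diagram chase," which is exactly the lifting argument you spell out.
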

\begin{proof} Choose an idempotent $\omega_1$ such that
$L = (\beta G) \cdot \omega_1,$ and look at the G-map, $\rho_{\omega_1}: \beta G \to L.$ By associativity of the product and the fact that $\omega_1$ is idempotent, we have that $\rho_{\omega_1} \circ \rho_{\omega_1} = \rho_{\omega_1}.$
Also, for $z= \omega \cdot \omega_1 \in L,$ we have that $\rho_{\omega_1}(z) = z.$

This shows that $\rho_{\omega_1}$ is a G-equivariant retraction of $\beta G$ onto $L.$ Now a little diagram chase, shows that $\beta G$ G-projective implies that $L$ is G-projective.
\end{proof}

Note that if $X$ is any G-space, then the action of G on X extends to an action of $\beta G$ on $X$. To see this, note that for each fixed $x \in X,$ the map $g \to g \cdot x,$ extends to a continuous function, $f_x: \beta G \to X$ and we set $\omega \cdot x= f_x(\omega).$ Since $f_x(g_1 \cdot g_2) = f_{g_2 \cdot x}(g_1),$ it follows by taking limits along nets first, that $(g_1 \cdot \omega_2) \cdot x = g_1 \cdot ( \omega_2 \cdot x),$ and then that $(\omega_1 \cdot \omega_2) \cdot x = \omega_1 \cdot (\omega_2 \cdot x),$ i.e., that the action is associative.

\begin{thm} \label{min} Let G be a countable, discrete group, let X be a minimal G-space, and let L be a minimal left ideal in $\beta G.$
If we fix any point, $x_0 \in X$ and let $f_0: \beta G \to X,$ be the map $f_0(\omega) = \omega \cdot x_0,$ then $(L,f_0)$ is a G-projective cover of X.
\end{thm}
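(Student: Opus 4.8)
The plan is to show that $(L, f_0)$ satisfies the three requirements of a G-projective cover: that $f_0$ restricted to $L$ is onto $X$, that $L$ is G-projective, and that $(L, f_0)$ is G-essential. The second of these is already handed to us by the previous theorem, so the real work is in the surjectivity and the essentiality.

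First I would establish that $f_0(L) = X$. Since $L = (\beta G)\cdot\omega_1$ for some idempotent $\omega_1 \in G^*$, a point of $L$ has the form $\omega\cdot\omega_1$, and $f_0(\omega\cdot\omega_1) = (\omega\cdot\omega_1)\cdot x_0 = \omega\cdot(\omega_1\cdot x_0)$ using the associativity of the extended $\beta G$-action established in the remark preceding the statement. So $f_0(L)$ is the $\beta G$-orbit of the single point $y_0 := \omega_1\cdot x_0$. The key observation is that $f_0(L) = (\beta G)\cdot y_0 = \overline{G\cdot y_0}$, the closure of the ordinary G-orbit of $y_0$ (this is exactly how $\omega\cdot y_0$ was defined, as a limit along nets in $G$). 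Since $X$ is minimal, every G-orbit is dense, so $\overline{G\cdot y_0} = X$; hence $f_0$ maps $L$ onto $X$. One should also check $f_0$ is a G-map, but $f_0(g\cdot\omega) = (g\cdot\omega)\cdot x_0 = g\cdot(\omega\cdot x_0) = g\cdot f_0(\omega)$ follows again from associativity, and restriction to the G-invariant set $L$ preserves this.

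The essentiality is where I expect the main obstacle, and here I would exploit the semigroup structure rather than try to mimic the minimal-subset argument of Proposition \ref{2.15}, which does not obviously respect the G-action. Suppose $Y$ is a G-space, $h: Y \to L$ is a G-map, and $f_0(h(Y)) = X$; I must show $h(Y) = L$. The idea is to use the rigidity of $L$ as a G-projective space together with the retraction structure. Because $L$ is G-projective, from the onto G-map $f_0\circ h : h^{-1}(\text{-}) \cdots$ one produces, via projectivity of $L$ applied to the cover $f_0\circ h$ of $X$, a G-map $s: L \to Y$ with $f_0\circ(h\circ s) = f_0$. Then $h\circ s: L \to L$ is a G-map commuting with $f_0$, and I would like to conclude $h\circ s = \mathrm{id}_L$ so that $h$ is onto. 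This requires G-rigidity of $L$, which is the delicate point: it must be verified that $L$, as a minimal left ideal, is a G-rigid cover of $X$.

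To close this gap I would prove G-rigidity of $(L, f_0)$ directly from the semigroup structure. A G-map $\psi: L \to L$ commuting with $f_0$ must, by G-equivariance and continuity, be determined by where it sends $\omega_1$; writing $\psi(\omega_1) = \eta\cdot\omega_1$ and using $f_0\circ\psi = f_0$ together with the fact that $\psi$ is right multiplication-like on the minimal left ideal, one uses \cite[Theorem 2.11c]{HS}: any G-map between minimal left ideals is of the form $\rho_\omega$, and the constraint $f_0\circ\rho_\omega = f_0$ forces $\rho_\omega$ to fix $L$ pointwise because $\omega_1$ is idempotent and $\rho_{\omega_1}$ already fixes $L$. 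Thus $\psi = \mathrm{id}_L$, establishing G-rigidity. Combined with the preceding proposition that a G-projective, G-rigid cover is G-essential, this yields that $(L, f_0)$ is G-essential, completing the verification that it is a G-projective cover of $X$.
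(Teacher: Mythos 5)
Your first half (that $f_0$ is a G-map and that $f_0(L)=X$) is correct and is exactly the paper's argument: $f_0(L)=(\beta G)\cdot(\omega_1\cdot x_0)=\overline{G\cdot(\omega_1\cdot x_0)}=X$ by minimality of $X$. The problem is your treatment of essentiality, which routes through G-rigidity of $(L,f_0)$ --- and that intermediate claim is false in general. Take $X=\{x\}$ a single point with the trivial action (a perfectly good minimal G-space). The paper has already proved that a singleton admits \emph{no} G-projective, G-rigid cover when $G$ is a nontrivial countable group, yet the present theorem asserts $(L,f_0)$ is a G-projective (hence G-essential) cover of $\{x\}$; so $(L,f_0)$ cannot be G-rigid there. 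Concretely, for any $\omega$ in the group $\omega_1\cdot\beta G\cdot\omega_1$ with $\omega\neq\omega_1$, the right translation $\rho_\omega:L\to L$ is a G-map (left and right multiplications commute), it trivially satisfies $f_0\circ\rho_\omega=f_0$ when $X$ is a point, and it moves $\omega_1$ to $\omega$. So your final step --- ``the constraint $f_0\circ\rho_\omega=f_0$ forces $\rho_\omega$ to fix $L$ pointwise because $\omega_1$ is idempotent'' --- does not hold; $\rho_{\omega_1}$ fixes $L$, but other $\rho_\omega$'s need not, and nothing in the constraint rules them out. This is precisely the phenomenon the paper flags in its remark that G-projective and G-essential do not imply G-rigid.

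The fix is much shorter and avoids rigidity entirely: if $h:Y\to L$ is a G-map, then $h(Y)$ is a nonempty compact, G-invariant subset of $L\subseteq\beta G$. Since the extended action satisfies $\omega\cdot z\in\overline{G\cdot z}$ for every $z$, a closed G-invariant subset of $\beta G$ is automatically a closed \emph{left ideal} of the semigroup $\beta G$. A left ideal contained in the minimal left ideal $L$ must equal $L$, so $h(Y)=L$. (Note this does not even use the hypothesis $f_0(h(Y))=X$; essentiality comes for free from the minimality of $L$ as a left ideal, not from the minimality of $X$, which is only needed for surjectivity of $f_0|_L$.) Your instinct to ``exploit the semigroup structure rather than mimic Proposition \ref{2.15}'' was right; you just applied it to the wrong object --- the structure should be used on the image $h(Y)$ directly, not to manufacture a rigidity statement that fails.
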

\begin{proof} We have that $f_0$ is a G-map by the associativity of the action. Write $L= (\beta G) \cdot \omega_0,$ with $\omega_0$ idempotent, then $f_0(L) = \{ (\omega \cdot \omega_0) \cdot x_0 : \omega \in \beta G \} = \{ \omega \cdot ( \omega_0 \cdot x_0): \omega \in \beta G \} = X,$ since the orbit of $\omega \cdot x_0$ is dense and the image of $L$ is compact. Hence, $(L, f_0)$ is G-projective and a G-cover of $X$.  

It remains to show that it is a G-essential cover. So assume that $Y$ is a G-space, that $h:Y \to L$ is a G-map and that $f_0 \circ h(Y)= X.$ Note that since $h$ is a G-map, $G \cdot h(Y) \subseteq h(Y).$ Hence, $h(Y)$ is a closed left ideal and so by minimality, $h(Y) =L.$  Thus, $L$ is a G-essential cover.
\end{proof}

Applying the duality between G-spaces and unital, abelian C*-algebras quipeed with G-actions leads to the following:

\begin{cor} Let G be a countable discrete group, and let $\cl A$ be a unital abelian C*-algebra equipped with a G-action. If $\cl A$ is G-simple, then there is an G-equivariant *-monomorphic embedding of $\cl A$ into a G-injective unital abelian C*-algebra $\cl B$ with the property that whenever $\cl C$ is another C*-algebra equipped with a G-action and $\pi: \cl B \to \cl C$ is a G-equivariant *-homomorphism that is one-to-one on $\cl A,$ it follows that $\pi$ is one-to-one.
\end{cor}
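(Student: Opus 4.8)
The plan is to translate everything through the contravariant duality functor and then simply read off the corollary from Theorem \ref{min}. The statement concerns a unital abelian C*-algebra $\cl A$ with a G-action that is G-simple, and it asks for a G-equivariant embedding into a G-injective $\cl B$ with a rigidity-type uniqueness property. Dually, $\cl A = C(X)$ for a compact Hausdorff space $X$, and the G-action on $\cl A$ corresponds to a G-action on $X$ making it a G-space. The observation noted just before Theorem \ref{min} (that $X$ is minimal iff $C(X)$ is G-simple) shows that the hypothesis of G-simplicity translates precisely to $X$ being a minimal G-space, so Theorem \ref{min} applies.

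First I would set $X$ to be the maximal ideal space of $\cl A$, so $\cl A = C(X)$ with $X$ a minimal G-space, and invoke Theorem \ref{min} to produce a minimal left ideal $L \subseteq \beta G$ together with the G-map $f_0 : L \to X$ giving a G-projective cover of $X$. Dualizing, set $\cl B = C(L)$, which carries the induced G-action; since $L$ is G-projective, Proposition \ref{2.5} tells us $\cl B = C(L)$ is G-injective in the category of abelian C*-algebras with G-actions and G-equivariant $*$-homomorphisms. The covering map $f_0$ being a continuous G-surjection induces, by composition, a G-equivariant $*$-homomorphism $\pi_{f_0} : \cl A = C(X) \to C(L) = \cl B$, and this map is a monomorphism precisely because $f_0$ is onto. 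This furnishes the desired G-equivariant $*$-monomorphic embedding of $\cl A$ into the G-injective algebra $\cl B$.

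It then remains to verify the uniqueness/rigidity clause: if $\cl C$ is any C*-algebra with a G-action and $\pi : \cl B \to \cl C$ is a G-equivariant $*$-homomorphism that is one-to-one on (the image of) $\cl A$, then $\pi$ is one-to-one. Here I would dualize once more, but carefully: a G-equivariant $*$-homomorphism out of $\cl B = C(L)$ need not be induced by a map between compact spaces unless we first replace $\cl C$ by the closure of the image, so I would pass to the abelian C*-algebra $\pi(\cl B) \subseteq \cl C$, write it as $C(W)$ for a G-space $W$, and realize $\pi$ as composition with a continuous G-map $\psi : W \to L$. The hypothesis that $\pi$ is injective on $\cl A$ translates to $f_0 \circ \psi$ being onto $X$, i.e. $f_0(\psi(W)) = X$. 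Since $(L, f_0)$ is a G-essential cover of $X$, the defining property of G-essentiality forces $\psi(W) = L$, whence $\psi$ is onto and therefore the induced $\pi$ is injective on all of $\cl B$.

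The step I expect to be the main obstacle is the bookkeeping in this last dualization, since $\cl C$ is a genuine (possibly noncommutative) C*-algebra with a G-action rather than an abelian one, so the clean space-level duality is not directly available to the full codomain. The honest route is to note that $\pi(\cl B)$ is a quotient of the abelian algebra $\cl B$, hence itself abelian, so it equals $C(W)$ for some G-space $W$ and $\pi$ factors through a continuous G-surjection $\cl B \twoheadrightarrow C(W)$ dual to a G-embedding $\psi : W \hookrightarrow L$; one must check that $\psi$ is genuinely G-equivariant, which follows because $\pi$ is G-equivariant and the action on $W$ is the one induced by that on $\pi(\cl B)$. Once this identification is in place, the argument is precisely the G-equivariant analogue of the essentiality argument used in Theorem \ref{existhm}, and the conclusion follows immediately from the G-essentiality established in Theorem \ref{min}.
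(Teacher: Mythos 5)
Your proposal is correct and follows essentially the same route as the paper, whose proof is simply the terse observation that the corollary is the dualization of Theorem \ref{min} via the equivalence of G-simplicity with minimality and the translation of G-essentiality into the C*-algebra category. Your additional care in passing to the abelian image $\pi(\cl B)=C(W)$ before dualizing the essentiality argument is a worthwhile detail that the paper leaves implicit.
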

\begin{proof} This is a combination of the fact that minimal is the same as G-simple, together with a translation of G-essential from the topological category to the category of abelian C*-algebras.
\end{proof} 

In Ellis \cite{E} and also in \cite{HS} it is proven that the minimal left ideals in $\beta G,$ are the {\bf universal} minimal dynamical systems. It can be shown quite easily, that the G-projectivity of minimal left ideals implies this universal property, but the converse is not so clear. In any case, the connections between Ellis' constructions and projectivity in the topological category seems to have gone unnoticed.


\section{G-injectivity}

We have seen that there is a strong correspondence between projectivity for topological spaces and various notions of injectivity for linear spaces.
In this section, we explore those connections in the presence of an action by a countable discrete group.

The Gelfand duality carries through in this setting to a contravariant functor between the category whose objects are G-spaces with morphisms the G-maps and the category whose objects are abelian C*-algebras equipped with G-actions and whose morphisms are G-equivariant *-homomorphisms. In particular, a space $P$ is G-projective if and only if $C(P)$ is G-injective in this latter category, i.e., G-equivariant *-homomorphisms into $C(P)$ have G-equivariant *-homomorphic extensions.

Moreover, if $P$ is the
projective cover of $X$, then we have seen that we may identify
$I(C(X))= C(P)$ and that $P$ and hence, $I(C(X)),$ is also endowed with a G-action. However, we have also seen that the ordinary projective cover of a space need not be G-projective, and hence not G-injective in the above category.

This leads naturally, to the question of whether or not $C(P)$ is G-injective in some appropriate category and whether or not it is the appropriate notion of the G-injective envelope.

Hamana \cite{H3} \cite{H4} studies injectivity in the presence of a G action for  two larger categories, the category whose objects are operator spaces(respectively, systems)
with a G-action consisting of a group of completely isometric isomorphisms(respectively, unital,
complete order isomorphisms) and morphisms consisting of the completely contractive(respectively, unital
completely positive) G-equivariant maps. In this setting, he proves that every operator space(respectively,
operator system) $V$ has a {\em G-injective envelope $I_G(V),$} that is a ``G-essential'', ``G-rigid'',
"G-injective" extension of $V,$ where the quotation marks are used to
indicate that these definitions have the analogous(but not necessarily
equivalent) meanings in these categories. Moreover, for operator systems, their G-injective envelopes are the same in
either category that Hamana considers. Hamana obtains his injective envelope by first embedding the
G-operator system into a G-operator space that is  injective in the usual sense and G-injective in an appropriate sense. He then obtains
the G-injective envelope as the range of a minimal G-equivariant idempotent map. Because the G-injective envelope is the range of a projection
applied to a space that is injective in the usual sense, it follows that the G-injective envelope is also injective in the usual sense.

For a simple example of Hamana's construction, consider the complex numbers $\bb C$ equipped with
the trivial G-action and $\ell^{\infty}(G)$ equipped with the G-action induced by multiplication on G.
We can consider $\bb C$ as the subspace of $\ell^{\infty}(G)$ consisting of the scalar multiplies of
the identity. An extension of the identity map on $\bb C$ to a (completely) positive G-equivariant map
on $\ell^{\infty}(G)$  would be a G-invariant mean. Hence, $\bb C$ is G-injective if and only if G is 
amenable. On the other hand, Hamana shows that $\ell^{\infty}(G)$ is always G-injective.
Thus, $I_G(\bb C) = \bb C = I(\bb C)$ if and only if $G$ is amenable. For G non-amenable, we have $I(\bb C)= \bb C \subset I_G(\bb C) \subseteq \ell^{\infty}(G).$  It follows readily from his work(we show this below) that $I_G(\bb C) \ne \ell^{\infty}(G),$  but an exact characterization of $I_G(\bb C)$ unanswered.  

\begin{prop} Let G be a non-abelian group, then $\bb C \subset I_G(\bb C) \subset \ell^{\infty}(G).$
\end{prop}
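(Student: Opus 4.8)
The plan is to establish the chain of containments and then argue that each is proper, with the right-hand inclusion being the substantive point. For the containments, observe that $\bb{C}$ embeds G-equivariantly into $\ell^{\infty}(G)=C(\beta G)$ as the constant functions, where $\ell^{\infty}(G)$ carries the G-action induced by left translation and is G-injective by the Corollary following Proposition \ref{2.3} (take $W$ to be a single point). Since $\ell^{\infty}(G)$ is a G-injective operator system containing $\bb{C}$, the defining property of the G-injective envelope—namely that $I_G(\bb{C})$, being G-essential and G-injective over $\bb{C}$, embeds while fixing $\bb{C}$ into every G-injective extension of $\bb{C}$—yields a copy of $I_G(\bb{C})$ inside $\ell^{\infty}(G)$. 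Thus $\bb{C}\subseteq I_G(\bb{C})\subseteq\ell^{\infty}(G)$.

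For the properness of the right-hand inclusion I would exploit the \emph{right} translation automorphisms. For $h\in G$ define $\rho_h\colon\ell^{\infty}(G)\to\ell^{\infty}(G)$ by $(\rho_h f)(x)=f(xh)$. Each $\rho_h$ is a unital $\ast$-automorphism, hence unital completely positive; it fixes the constants $\bb{C}$; and a direct computation, writing the left action as $(\lambda_g f)(x)=f(g^{-1}x)$, shows $\rho_h\circ\lambda_g=\lambda_g\circ\rho_h$, so $\rho_h$ is G-equivariant. If one had $I_G(\bb{C})=\ell^{\infty}(G)$, then Hamana's G-rigidity of the G-injective envelope would force every G-equivariant unital completely positive self-map fixing $\bb{C}$ to be the identity; in particular $\rho_h=\mathrm{id}$ for every $h$, i.e. $xh=x$ for all $x,h\in G$, making $G$ trivial. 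Since $G$ is non-abelian, hence nontrivial, this is a contradiction, so $I_G(\bb{C})\subsetneq\ell^{\infty}(G)$. I expect this rigidity step to be the main obstacle: one must check that $\rho_h$ is genuinely G-equivariant and distinct from the identity, and invoke precisely Hamana's rigidity property of $I_G$ in the operator-system category.

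For the left-hand inclusion, recall from the discussion preceding the proposition that $\bb{C}$ is G-injective if and only if $G$ is amenable, since an identity-extending G-equivariant (completely) positive map $\ell^{\infty}(G)\to\bb{C}$ is exactly a G-invariant mean; equivalently, $I_G(\bb{C})=\bb{C}$ if and only if $G$ is amenable. Hence $\bb{C}\subsetneq I_G(\bb{C})$ precisely when $G$ is non-amenable, and this is where a hypothesis beyond nontriviality genuinely enters—note that non-abelianness is a necessary companion to non-amenability, as every abelian group is amenable. Combining the three steps gives $\bb{C}\subset I_G(\bb{C})\subset\ell^{\infty}(G)$ with both inclusions strict.
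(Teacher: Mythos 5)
Your proposal is correct, and for the decisive step it takes a genuinely different route from the paper. Both arguments begin the same way: $\ell^{\infty}(G)=C(\beta G)$ is G-injective because $\beta G$ is G-projective, so the inclusion $\bb C\subseteq\ell^{\infty}(G)$ extends to a complete order embedding of $I_G(\bb C)$ into $\ell^{\infty}(G)$; and both obtain the left-hand strictness from the invariant-mean characterization of amenability (you rightly flag that this step really requires $G$ non-amenable rather than merely non-abelian --- the paper's own proof just points back to the same amenability discussion, so the hypothesis in the statement should be read as ``non-amenable''). Where you diverge is in showing the embedding is never onto. The paper composes the embedding with the quotient $q:\ell^{\infty}(G)\to\ell^{\infty}(G)/c_0(G)$: since $q$ is a complete isometry on $\bb C$, G-\emph{essentiality} forces it to be a complete isometry on the copy of $I_G(\bb C)$, while $q$ annihilates $c_0(G)$, so that copy cannot be all of $\ell^{\infty}(G)$. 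You instead exhibit the right-translation automorphisms $\rho_h$, check that they are unital, G-equivariant for the left action, fix $\bb C$, and differ from the identity for $h\neq e$, and then invoke G-\emph{rigidity} of the G-injective envelope to rule out $I_G(\bb C)=\ell^{\infty}(G)$. Your version is marginally more robust: it needs only $G\neq\{e\}$, whereas the quotient argument needs $c_0(G)$ to be a proper nonzero ideal, i.e.\ $G$ infinite (harmless here, since non-amenable groups are infinite). The paper's version yields a small bonus of information, namely that any embedded copy of $I_G(\bb C)$ meets $c_0(G)$ only in $0$. Both rest on standard properties of $I_G$ established by Hamana, so either is acceptable.
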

\begin{proof}   We have seen above why $\bb C \ne I_G(\bb C).$ Since $\ell^{\infty}(G)= C(\beta G)$ and $\beta G$ is G-projective, it is G-injective in Hamana's sense by Proposition \ref{2.6}. Thus, the inclusion  $\bb C \subseteq \ell^{\infty}(G)$ extends to a completely order isomorphism of $I_G(\bb C)$ into $\ell^{\infty}(G).$ If we compose this inclusion with the quotient map $q: \ell^{\infty}(G) \to \ell^{\infty}(G)/c_0(G),$ then since the quotient map is a complete isometry on $\bb C$, it must be a complete isometry on $I_G(\bb C).$ This follows from the fact that the G-injective envelope is an essential extension and the definition of essential extension \cite{H3}.

Thus, there exist inclusions of $I_G(\bb C)$ into $\ell^{\infty}(G)$ extending the inclusion of $\bb C$ into $\ell^{\infty}(G)$, but this map can never be onto.
\end{proof}

Because Hamana obtains his G-injective envelope of a G-operator system as a
subspace of a non-commutative crossed product algebra, it is not clear whether or not
$I_G(C(X))$ is even an abelian $C^*$-algebra.

\begin{thm} Let G be an arbitrary discrete group and let $X$ be a
  G-space. Then $I_G(C(X))$ is an abelian C*-algebra.
\end{thm}
\begin{proof} Let $(P,f)$ be a G-projective space that covers  $X$. Then
  composition with $f$ induces a G-equivariant, *-monomorphism,
  $\Pi_f: C(X) \to C(P).$ Since $C(P)$ is G-injective in the category
  of G-operator systems, $\Pi_f$ will extend to a G-equivariant
  complete order isomorphism of $I_G(C(X))$ into $C(P)$. Since
  $I_G(C(X))$ is also G-injective there will exist a G-equivariant
  completely positive projection of $C(P)$ onto $I_G(C(X)).$ Endowing
  $I_G(C(X))$ with the Choi-Effros product induced by this projection
  makes it into a commutative $C^*$-algebra.
\end{proof}

\begin{thm} Let $X$ be a minimal G-space and let $L$ be a minimal left ideal in $\beta G.$ Then there exists a G-equivariant *-monomorphism of $I_G(C(X))$ into $C(L).$
\end{thm}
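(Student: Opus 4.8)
The plan is to pass to the Gelfand dual, where the statement becomes a lifting problem for G-maps, and to read off the desired *-monomorphism from a surjection of G-spaces. By the preceding theorem we know $I_G(C(X))$ is an abelian C*-algebra, so write $I_G(C(X)) = C(Z)$ where $Z$ is a compact Hausdorff space; the G-action on $I_G(C(X))$ makes $Z$ a G-space. The inclusion $C(X) \subseteq I_G(C(X))$ is a G-equivariant *-monomorphism for the Choi--Effros product — since $C(X)$ already sits as a C*-subalgebra of the ambient injective algebra, that product restricts to the original product on $C(X)$ — and hence dualizes to a surjective G-map $\pi \colon Z \to X$.

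The structural fact I would isolate is that $(Z,\pi)$ is a G-essential cover of $X$ in the topological sense of Section 3. This is the translation of the statement that $C(X) \subseteq I_G(C(X))$ is a G-essential extension in Hamana's sense. Concretely, if $h \colon Y \to Z$ is a G-map with $\pi(h(Y)) = X$ and $h(Y) = Z_0$, then restriction along $Z_0 \hookrightarrow Z$ gives a G-equivariant *-homomorphism $C(Z) \to C(Z_0)$ whose restriction to $C(X)$ corresponds to the surjection $\pi|_{Z_0}\colon Z_0 \to X$ and is thus a complete order isomorphism onto its image; Hamana's G-essentiality then forces $C(Z) \to C(Z_0)$ to be injective, which is impossible unless $Z_0 = Z$.

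With $(Z,\pi)$ recognized as a G-essential cover, I would bring in the minimal left ideal. Fixing $x_0 \in X$ and setting $f_0(\omega) = \omega \cdot x_0$, Theorem \ref{min} gives that $(L, f_0)$ is a G-projective cover of $X$; in particular $L$ is G-projective and $f_0 \colon L \to X$ is a surjective G-map. Since $\pi \colon Z \to X$ is onto and $L$ is G-projective, $f_0$ lifts through $\pi$ to a G-map $\sigma \colon L \to Z$ with $\pi \circ \sigma = f_0$. Then $\sigma(L)$ is a compact, G-invariant subset of $Z$ with $\pi(\sigma(L)) = f_0(L) = X$, so G-essentiality of $(Z,\pi)$ yields $\sigma(L) = Z$. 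Composition with the surjection $\sigma$ is therefore a G-equivariant *-monomorphism $C(Z) = I_G(C(X)) \to C(L)$, which is exactly the map we seek.

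The main obstacle I anticipate is the essentiality translation of the second paragraph: one must check that Hamana's notion of a G-essential extension, which is phrased in terms of completely positive G-maps into arbitrary G-operator systems, specializes correctly to the purely topological G-essential-cover condition used in the lifting argument. This is also why I avoid the tempting shortcut of extending $\Pi_{f_0}\colon C(X) \to C(L)$ directly by G-injectivity of $C(L)$: such an extension is a priori only completely positive, and proving it multiplicative is precisely the difficulty the dual argument circumvents. Everything else reduces to the duality dictionary together with the two facts supplied earlier — that $I_G(C(X))$ is abelian and that minimal left ideals are G-projective.
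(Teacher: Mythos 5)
Your proposal is correct and follows essentially the same route as the paper: identify $I_G(C(X))$ with $C(Y)$ via the previous theorem, use the G-projectivity of $L$ together with the surjection $f_0:L\to X$ from Theorem \ref{min} to produce a lifting $L\to Y$, and invoke the essentiality of the extension $C(X)\subseteq I_G(C(X))$ to conclude the induced map on function algebras is a *-monomorphism. The only difference is cosmetic — you dualize Hamana's essentiality into the statement that the lifted map is onto $Y$, while the paper applies essentiality directly to conclude $\Pi_r$ is injective — and both are the same fact under Gelfand duality.
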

\begin{proof} Identify $I_G(C(X))=C(Y)$ with $Y$ a G-space, so that the inclusion of $C(X)$ into $C(Y)$ is given as $\Pi_h$ for some onto G-map $h:Y \to X.$
By Theorem \ref{min}, there is an onto G-map, $f:L \to X$ and by the G-projectivity of $L$, we have a G-map $r:L \to Y$ with $h \circ r = f.$
Thus, $\Pi_r: C(Y) \to C(L),$ since $\Pi_r \circ \Pi_h = \Pi_f$ is a *-monomorphism on $C(X)$, by the fact that $\Pi_h$ is an essential extension, $\Pi_r$ must also be a *-monomorphism.
\end{proof}

\begin{prob} Let $X$ be a minimal G-space. If G is non-amenable, is $I_G(C(X))= C(L)$ for $L$ a minimal left ideal in $\beta G$ ? In particular, is $I_G(\bb C) = C(L)$ when G is non-amenable?
\end{prob}

Since we do not have equality for G amenable, we suspect that the equality of $I_G(\bb C)$ and $C(L)$ could be a measure of how badly non-amenable the group is.  

\begin{prob} Let $X$ be a minimal G-space. Give necessary and sufficient conditions to guarantee that $I_G(C(X)) = C(L).$
This would be especially interesting for G amenable.
\end{prob}


\end{document}